\title{On the generalized interlacing property for the zeros of Bessel functions}
\author{Seok-Young Chung\footnote{Seok-Young.Chung@ucf.edu. Department of Mathematics, University of Central Florida,
4393 Andromeda Loop N., Orlando, FL 32816, USA.}
\and Sujin Lee\footnote{watertrue015@gmail.com. Department of Mathematics, College of Natural Sciences, Chung-Ang University,
84 Heukseok-Ro, Dongjak-Gu, Seoul 06974, Korea.}
\and Young Woong Park\footnote{ywpark1839@gmail.com. Department of Mathematics, College of Natural Sciences, Chung-Ang University,
84 Heukseok-Ro, Dongjak-Gu, Seoul 06974, Korea.}}
\date{}
\begin{document}

\maketitle

{\bf Abstract.}
This paper investigates a generalized interlacing property between Bessel functions, particularly $J_\nu$ and $J_\mu$, where the difference $\vb{\nu-\mu}$ exceeds $2$. This interlacing phenomenon is marked by a compensatory interaction with the zeros of Lommel polynomials, extending our understanding beyond the traditional $\vb{\nu-\mu} \le 2$ regime. The paper extends the generalized interlacing property to cylinder functions and derivative of Bessel functions, as an application. It is also discussed that Siegel's extension of Bourget hypothesis to rational numbers of $\nu$ cannot be further improved to arbitrary real numbers.

\medskip

{\bf Keywords.} {Bessel functions, Bourget's hypothesis, cylinder functions, interlacing property, Lommel polynomials, Wronskian.}

\medskip

{\bf 2020 MSC.} {33C10, 33C45, 26D15}

\section{Introduction}

The primary focus of this paper is to investigate a generalization of the interlacing property between Bessel functions $J_\nu$ and $J_\mu$ when the difference $\vb{\nu-\mu}$ exceeds $2$. The key objective is to examine the interlacing behavior between the zeros of $J_\nu$ and $J_{\nu+m}$ for $m=3,\,4,\,\ldots\,$, where this interlacing phenomenon is characterized by the compensatory interaction with the zeros of the Lommel polynomials. By establishing the interplay between these functions, we aim to provide a comprehensive understanding of the extended interlacing property beyond the conventional regime of $\vb{\nu-\mu} \le 2$. 

As customary, $J_\nu$ stands for the first kind Bessel function of order $\nu$, defined as
\begin{equation*}
    J_\nu(x) = \sum_{k=0}^\infty \frac{ (-1)^k }{k! \,\Gamma(\nu+k+1)}\rb{\frac{x}{2}}^{2k+\nu},
\end{equation*}
where $\Gamma$ denotes the gamma function. We denote by $\cb{j_{\lambda,k}}_{k=1}^\infty$ the sequence of all positive zeros of the Bessel function $J_\lambda$ arranged in ascending order of magnitude. It is important to note that unless explicitly stated otherwise, all sequences of positive zeros in this paper are arranged in ascending order of magnitude. We might abbreviate sub-indices of a zero if they are obvious in the context.

To offer further insight into the interlacing property of functions $J_\nu$ and $J_{\nu+m}$, let us revisit the patterns observed in relation to the patterns (see \cite[\S 15.22]{Watson2} and \cite{CC1}) with respect to the positive zeros of the first kind Bessel functions $J_\nu$ and $J_\mu$, of order $\nu$ and $\mu=\nu+1$ or $\nu+2$ respectively, as follows:
\begin{equation}\label{pattern}
    \begin{split}
        0< j_{\nu,1} < j_{\mu,1} < j_{\nu,2} < j_{\mu,2} < \cdots, \quad (\mu>0) \\
        0< j_{\mu,1} < j_{\nu,1} < j_{\mu,2} < j_{\nu,2} < \cdots, \quad (\mu\le 0)
    \end{split}
\end{equation}
in which the case of $\mu =\nu +2$ and $\nu = - 1$ should be excluded.
It is said to have \emph{interlacing property} if the positive zeros of $J_\nu$ and $J_\mu$ satisfy one of the patterns described in \eqref{pattern}. Furthermore, Schl{\"a}fli's classical formula establishes a relationship between the derivative of a specific positive zero $j$ of $J_\nu$, where $\nu>0$, with respect to $\nu$ and an integral involving Bessel functions
\begin{equation*}
\frac{dj}{d\nu} = \frac{2\nu}{j J_{\nu+1}^2(j)} \int_0^j J_\nu^2(t) \frac{dt}{t}.
\end{equation*}

It is also noteworthy to point out two alternative forms, as presented by Ifantis, Siafarikas \cite{Ifantis} and Ismail, Muldoon \cite{Ismail2}, both of which provide a discrete version of representation for $dj/d\nu$ in terms of a series of interconnected functions
\begin{align*}
    \frac{dj}{d\nu} &= j\rb{ \sum_{k=1}^\infty J_{\nu+k}^2(j) } \bigg/ \rb{ \sum_{k=1}^\infty \rb{\nu+k} J_{\nu+k}^2(j) }\\
    &= \frac{2}{j} \sum_{k=0}^\infty R_{k,\nu+1}^2\rb{j}.
\end{align*}

Additionally, a well-known result due to Watson provides a formula for the derivative of a specific positive zero $c$ of the cylinder function $C_\nu^\alpha= \cos{\alpha} J_\nu - \sin{\alpha} Y_\nu$, for any real values of $\nu$ and $0\leq\alpha<\pi$,
\begin{equation}\label{W}
    \frac{dc}{d\nu} = 2c \int_0^\infty K_0\rb{2c \sinh{t}} e^{-2\nu t}dt>0,
\end{equation}
where $Y_\nu$ stands for the second kind Bessel function of order $\nu$ and $K_0$ stands for the second kind modified Bessel function of order $0$. It is advised to refer to \cite[\S 15.6]{Watson2} and \cite{Ismail2} for further details and derivations of the aforementioned formulas.

Considering these formulas, it is evident that the positive zero $j$ of $J_\nu$ is a monotonically increasing function with respect to $\nu$. Moreover, as it turns out, the positive zero $j_{\nu,k}$, $k\ge1$, tends to infinity as $\nu$ approaches infinity. (see \cite{Kerimov}) Consequently, as $\mu$ becomes large, the positive zero $j_\mu$ of $J_\mu$ moves monotonically farther away from the origin, leading to the inevitable breakdown of the interlacing property between $J_\nu$ and $J_\mu$ in the sense that
\begin{equation*}
    j_{\nu,1} < j_{\nu,2} < j_{\mu,1}, \quad (\nu \ll \mu).
\end{equation*}

The result established by P{\'a}lmai \cite{Palmai} regarding the interlacing property between Bessel functions $J_\nu$ and $J_\mu$ can be concisely stated as follows:
\begin{varthm}{A}\label{Theorem A}
For $\nu>-1$ and $\mu>-1$, the positive zeros of $J_\nu$ and $J_\mu$ are interlaced if and only if $0<\vb{\nu-\mu}\le 2$.
\end{varthm}
\noindent The theorem presented above describes the necessary and sufficient condition for when the interlacing property breaks down. We refer to the survey papers \cite{Elbert}, \cite{Laforgia} for further details and additional information regarding the zeros of Bessel functions.

Optimal parameter intervals, particularly in the context of interlacing properties, have been a subject of investigation for a considerable duration, not limited to Bessel functions. Driver, for instance, conducted research into different orthogonal polynomials, such as Laguerre and Jacobi polynomials, with the aim of identifying these optimal intervals. (see for instance \cite{Driver1} and \cite{Driver3}) It would be of interest to explore the applicability of the generalized interlacing property within these specified intervals.

In this comprehensive study, we investigate the intricate relationship between Bessel functions and Lommel polynomials, shedding light on their interplay through common zeros. In section 2, we focus our attention on the examination of the common zeros of $J_\nu$ and $J_{\nu+m}$, thereby laying the foundation for a deeper understanding of their behavior. In particular, we show that Bourget's hypothesis cannot be extended to arbitrary real values of a parameter $\nu$. Section 3 marks a pivotal point as we unveil the core of our work; the main theorem on the generalized interlacing property, characterized by the compensatory interaction with the zeros of the Lommel polynomials, in conjunction with the common zeros we discussed. In section 4 and 5, we extend our results to the realm of cylinder functions and derivative of the Bessel functions, broadening the scope of our interlacing principle. 

As for the requisite background, we present the definition of the Lommel polynomials $R_{m,\nu}$ (see \cite{Erdelyi}), which arises as a result of an iterative recurrence for the Bessel functions
\begin{equation}\label{BL1}
    J_{\nu+m}(x) - R_{m,\nu}(x) J_\nu(x) + R_{m-1,\nu+1}(x) J_{\nu-1}(x) = 0,
\end{equation}
and these are polynomials in $1/x$, expressed explicitly as
\begin{equation}\label{L}
    R_{m,\nu}(x) = \sum_{k=0}^{ \lfloor \frac{m}{2} \rfloor } (-1)^k \binom{m-k}{k} \frac{ \Gamma\rb{ \nu+m-k } }{ \Gamma\rb{\nu+k} } \rb{\frac{x}{2}}^{2k-m},
\end{equation}
for $\nu,x\in \R$ and $m=0,\,1,\,\ldots\,$, where $\lfloor a \rfloor$ stands for the floor function of $a\in \R$. An extension with respect to the negative parameter $m$ can be derived using the Graf's formula
\begin{equation}\label{Graf}
    R_{-m,\nu}(x) = -R_{m-2,\nu-m+1}(x), \for m\in \mathbb{Z},
\end{equation}
Indeed, the Lommel polynomials satisfy the following recurrence relation
\begin{equation}\label{Lommel1}
    R_{m-1,\nu}(x) + R_{m+1,\nu}(x) = \frac{2\rb{\nu+m}}{x}R_{m,\nu}(x).
\end{equation}
We denote by $\{\rho_{n,\lambda,k}\}_{k\ge 1}$ the sequence of all positive zeros of $R_{n,\lambda+1}$.

\section{The zeros of \texorpdfstring{$J_\nu$}{Jnu} and \texorpdfstring{$J_{\nu+m}$}{Jnu+m} in common}
The objective of this section is to delve into the characteristics of the common zeros of $J_\nu$ and $J_{\nu+m}$. We introduce the notation $\mathcal{N}_{m,\nu}$ for $\nu\in\R$ and $m\in \mathbb{N}$ as
\begin{equation}\label{Z1}
    \mathcal{N}_{m,\nu} = \cb{ j_{\nu,k}\,:\, J_{\nu+m}\rb{j_{\nu,k}}=0 },
\end{equation}
which denotes the collection of positive zeros common to $J_\nu$ and $J_{\nu+m}$.

An essential observation to highlight is the interconnection between the set $\mathcal{N}_{m,\nu}$ and the Lommel polynomial $R_{m-1,\nu+1}$. A simple consequence from the equation \eqref{L} can be illustrated as
\begin{observation}\label{ob1}
Let $\nu>-1$ and $m\in \mathbb{N}$. Then we have
\begin{equation*}
    \mathcal{N}_{m,\nu} = \cb{ j_{\nu,k}\,:\, R_{m-1,\nu+1}\rb{j_{\nu,k}}=0 }.
\end{equation*}
Consequently,
\begin{equation*}
    \vb{ \mathcal{N}_{m,\nu} } \le \left\lfloor \frac{m-1}{2} \right\rfloor.
\end{equation*}
\end{observation}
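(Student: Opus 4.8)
The plan is to read the conclusion directly off the three-term Bessel--Lommel identity \eqref{BL1}. Evaluating \eqref{BL1} at a positive zero $x=j_{\nu,k}$ of $J_\nu$ makes the middle term $R_{m,\nu}(x)J_\nu(x)$ vanish, leaving
\begin{equation*}
    J_{\nu+m}\rb{j_{\nu,k}} = -R_{m-1,\nu+1}\rb{j_{\nu,k}}\,J_{\nu-1}\rb{j_{\nu,k}}.
\end{equation*}
If I can show that the factor $J_{\nu-1}\rb{j_{\nu,k}}$ never vanishes, then the right-hand side is zero exactly when $R_{m-1,\nu+1}\rb{j_{\nu,k}}=0$, which is precisely the desired set equality between $\mathcal{N}_{m,\nu}$ and $\cb{j_{\nu,k} : R_{m-1,\nu+1}\rb{j_{\nu,k}}=0}$.

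The crux is therefore establishing $J_{\nu-1}\rb{j_{\nu,k}}\ne 0$ for every positive zero of $J_\nu$. I would do this with the standard differentiation identity $J_\nu'(x)=J_{\nu-1}(x)-\frac{\nu}{x}J_\nu(x)$: at $x=j_{\nu,k}$ the second term drops out, so $J_\nu'\rb{j_{\nu,k}}=J_{\nu-1}\rb{j_{\nu,k}}$. Since $J_\nu$ is a nontrivial solution of the second-order Bessel equation, its positive zeros are simple for $\nu>-1$, whence $J_\nu'\rb{j_{\nu,k}}\ne0$ and thus $J_{\nu-1}\rb{j_{\nu,k}}\ne0$. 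This simplicity step, together with the hypothesis $\nu>-1$ guaranteeing that the relevant zeros are real and positive, is the only genuinely delicate point; everything else is algebra on \eqref{BL1}.

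For the cardinality bound, I would count the positive zeros of $R_{m-1,\nu+1}$ using the explicit formula \eqref{L}. Multiplying $R_{m-1,\nu+1}(x)$ by $x^{m-1}$ turns it into a polynomial in $x^2$ whose top term comes from the index $k=\lfloor\frac{m-1}{2}\rfloor$, so it has degree $\lfloor\frac{m-1}{2}\rfloor$ in the variable $x^2$. Consequently $R_{m-1,\nu+1}$ has at most $\lfloor\frac{m-1}{2}\rfloor$ positive real zeros, and since the set equality just proved shows that $\mathcal{N}_{m,\nu}$ is contained in the positive zero set of $R_{m-1,\nu+1}$, the estimate $\vb{\mathcal{N}_{m,\nu}}\le\lfloor\frac{m-1}{2}\rfloor$ follows at once.
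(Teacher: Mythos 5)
Your proof is correct and follows essentially the same route as the paper: both read the set equality off equation \eqref{BL1} evaluated at the positive zeros of $J_\nu$, using that $J_{\nu-1}$ cannot vanish there, and both bound $|\mathcal{N}_{m,\nu}|$ by the maximal number of positive zeros permitted by the explicit formula \eqref{L}. The only difference is cosmetic: you supply a proof of the non-vanishing $J_{\nu-1}(j_{\nu,k})\ne 0$ (via $J_\nu'(j_{\nu,k})=J_{\nu-1}(j_{\nu,k})$ and simplicity of the zeros), a standard fact which the paper simply invokes as ``$J_\nu$ and $J_{\nu-1}$ have no common zeros.''
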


\begin{proof}
We suppose $\zeta$ is a common zero of $J_\nu$ and $J_{\nu+m}$. By examining equation \eqref{BL1}, we deduce that $R_{m-1,\nu+1}(\zeta)J_{\nu-1}(\zeta)=0$. Since $J_\nu$ and $J_{\nu-1}$ have no common zeros, it follows that $\zeta$ must also be a zero of $R_{m-1,\nu+1}$. On the other hand, if we assume $\zeta$ is a common zero of $J_\nu$ and $R_{m-1,\nu+1}$, then it follows from equation \eqref{BL1} that $J_{\nu+m}(\zeta)=0$. Hence the result is immediate.

Consequently, the cardinality of $\mathcal{N}_{m,\nu}$ cannot exceed the number of possible positive zeros of $R_{m-1,\nu+1}$, which is $\left\lfloor \frac{m-1}{2} \right\rfloor$ by virtue of the definition \eqref{L}.
\end{proof}

We next spotlight Bourget's hypothesis, introduced in the 19th century, which states that the first kind Bessel functions $J_\nu$ and $J_{\nu+m}$ has no common zeros apart from the origin. This conjecture postulates for all nonnegative integral values of $\nu$ and positive integral values of $m$. For historical background and related results, one can refer to \cite[\S 15]{Watson2}. In 1929, Siegel's seminal work \cite[\S 4]{Siegel} established a more comprehensive theorem that led to the verification of Bourget's hypothesis as a straightforward consequence, as stated below
\begin{varthm}{B}\label{B}
    Let $m$ be a positive integer, and let $\nu$ be a rational number such that $-2\nu \ne m$. Then $J_{\nu}$ and $J_{\nu+m}$ have no common zeros except for the origin.
\end{varthm}

Additionally, Petropoulou et al. \cite{Petro} tackle the issue of the common zeros between two functions, namely, $J_\nu$ and $J_\mu$, when the difference between $\nu$ and $\mu$ is not a positive integer.

To explicate the case when $-2\nu = m$, we turn our attention to the classical Wronskian formula, given by
\begin{equation*}
    W\qb{J_\nu,\, J_{\nu+m}}(x) = W\qb{J_{\nu},\,J_{-\nu}}(x) = -\frac{2\sin\rb{\nu \pi}}{\pi x},
\end{equation*}
which keeps constant sign for $x>0$ and fixed $\nu\notin \mathbb{Z}$. Consequently, when $\nu$ is not an integer, $J_\nu$ and $J_{\nu+m}$ cannot share common zeros, as the Wronskian of these functions must vanish at any point where they have common zeros.  This fact, in conjunction with Theorem \ref{B}, leads to the following observation.
\begin{observation}\label{ob2}
    Let $m\in \mathbb{N}$, and let $\nu$ be a rational number such that $-2\nu \ne m$ if $\nu \in\mathbb{Z}$. Then
    $\mathcal{N}_{m,\nu}$ must be empty.
\end{observation}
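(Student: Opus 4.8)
The plan is to argue by cases according to whether the exceptional relation $-2\nu = m$ holds, since the hypothesis is engineered precisely so that the one configuration Siegel's result does not cover, namely $J_{\nu+m}=J_{-\nu}$, can occur only for non-integer $\nu$. First I would dispose of the generic case $-2\nu \neq m$. Here the hypotheses of Theorem \ref{B} are met verbatim: $m$ is a positive integer and $\nu$ is rational with $-2\nu \neq m$. Theorem \ref{B} then guarantees that $J_\nu$ and $J_{\nu+m}$ share no zeros apart from the origin. Since $\mathcal{N}_{m,\nu}$, by its definition in \eqref{Z1}, collects only the \emph{positive} common zeros, it must be empty.

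Next I would treat the remaining case $-2\nu = m$. Solving for $\nu$ gives $\nu = -m/2$. If $\nu$ were an integer, the standing hypothesis, which forbids $-2\nu = m$ whenever $\nu \in \mathbb{Z}$, would be violated; hence in this case $\nu$ is necessarily a non-integer rational. Because $\nu + m = -\nu$, we have $J_{\nu+m}=J_{-\nu}$, and the classical Wronskian identity recorded just above yields $W[J_\nu, J_{-\nu}](x) = -\frac{2\sin(\nu\pi)}{\pi x}$. For non-integer $\nu$ one has $\sin(\nu\pi)\neq 0$, so this Wronskian is strictly nonzero at every $x>0$. Were there a common positive zero $x_0 \in \mathcal{N}_{m,\nu}$, then $J_\nu(x_0)=J_{-\nu}(x_0)=0$ would force the Wronskian to vanish at $x_0$, a contradiction. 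Thus $\mathcal{N}_{m,\nu}$ is empty in this case as well, and combining the two cases completes the argument.

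I do not anticipate a substantive obstacle, as the statement is essentially a synthesis of Theorem \ref{B} with the constant-sign Wronskian fact. The only point demanding care is the bookkeeping of the hypothesis: one must verify that the sole gap left open by Theorem \ref{B}, the case $-2\nu = m$, is admitted by the hypothesis only when $\nu \notin \mathbb{Z}$, which is exactly the regime in which the Wronskian argument applies. Confirming that this conditional restriction dovetails with the non-vanishing of $W[J_\nu,J_{-\nu}]$ is the crux that makes the two ingredients fit together without leaving an unhandled subcase.
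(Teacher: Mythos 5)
Your proof is correct and takes essentially the same approach as the paper: the case $-2\nu \neq m$ is dispatched by Theorem \ref{B}, and the exceptional case $-2\nu = m$ (admissible only for non-integer rational $\nu$, where $J_{\nu+m}=J_{-\nu}$) is handled by the non-vanishing of the Wronskian $W\qb{J_\nu,\,J_{-\nu}}(x) = -\tfrac{2\sin(\nu\pi)}{\pi x}$ for $x>0$. Your case bookkeeping matches the paper's intent exactly, so there is nothing to add.
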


One may consider extending Theorem \ref{B} for irrational values of $\nu$. Nevertheless, such an extension is not feasible. We begin by introducing a significant lemma.

\begin{lemma} \label{BLem1}
Let $n\ge 2$ be fixed, and let $\rho=\rho(\nu)$ be a zero of the Lommel polynomial $R_{n,\nu+1}.$ Then
   \begin{equation*}
        1 <\lim_{\nu \to\infty}\frac{d\rho}{d\nu} < \infty .
    \end{equation*}
\end{lemma}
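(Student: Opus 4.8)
The plan is to differentiate the defining relation $R_{n,\nu+1}(\rho(\nu))=0$ implicitly in $\nu$, which gives
\[
\frac{d\rho}{d\nu} = -\frac{\partial_\nu R_{n,\nu+1}(\rho)}{\partial_x R_{n,\nu+1}(\rho)},
\]
so the whole matter reduces to comparing the two partial derivatives at the zero. Writing $a_k(\nu)=(-1)^k\binom{n-k}{k}\Gamma(\nu+1+n-k)/\Gamma(\nu+1+k)$, so that $R_{n,\nu+1}(x)=\sum_{k}a_k(\nu)(x/2)^{2k-n}$ by \eqref{L}, the point is that $\partial_\nu a_k = a_k\big(\psi(\nu+1+n-k)-\psi(\nu+1+k)\big)$ with $\psi$ the digamma function, and the asymptotic $\psi(\nu+1+n-k)-\psi(\nu+1+k)=\tfrac{n-2k}{\nu}+O(\nu^{-2})$ holds uniformly over the finitely many indices $k$. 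Since a direct computation gives $\partial_x R_{n,\nu+1}(x)=\tfrac1x\sum_k a_k(2k-n)(x/2)^{2k-n}$, comparing term by term yields $\partial_\nu R_{n,\nu+1}=-\tfrac{x}{\nu}\,\partial_x R_{n,\nu+1}+O(\nu^{-2})R_{n,\nu+1}+E$, where $E$ collects the uniform $O(\nu^{-2})$ remainders. Evaluating at $x=\rho$, where $R_{n,\nu+1}$ vanishes, I expect to obtain $\frac{d\rho}{d\nu}=\frac{\rho}{\nu}+o(1)$.

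The second step pins down the growth rate $\rho/\nu$. Rescaling $x=\nu y$ and using $a_k(\nu)\sim(-1)^k\binom{n-k}{k}\nu^{\,n-2k}$, the normalized polynomial $R_{n,\nu+1}(\nu y)$ converges coefficientwise to
\[
Q_n(y)=\sum_{k=0}^{\lfloor n/2\rfloor}(-1)^k\binom{n-k}{k}\left(\frac{y}{2}\right)^{2k-n}=U_n\!\left(\frac1y\right),
\]
where $U_n$ is the Chebyshev polynomial of the second kind, whose zeros are $\cos(j\pi/(n+1))$ for $j=1,\dots,n$. Hence the positive zeros of $Q_n$ are exactly $y_j=1/\cos(j\pi/(n+1))$ for those $j$ with $\cos(j\pi/(n+1))>0$, and they are simple. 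By Hurwitz's theorem (continuity of simple zeros of polynomials under coefficient convergence), each positive zero satisfies $\rho(\nu)/\nu\to y_j$ for some such $j$. Combining with the first step, $\lim_{\nu\to\infty}\frac{d\rho}{d\nu}=y_j=1/\cos(j\pi/(n+1))$. The bounds are then immediate: a positive zero forces $1\le j<(n+1)/2$, so $0<\cos(j\pi/(n+1))<1$ and therefore $1<y_j<\infty$, which is exactly the assertion.

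I anticipate the main obstacle to lie in the first step, where it is not enough that $\rho(\nu)/\nu$ converges; one must show the derivative $d\rho/d\nu$ converges to the same value. This is delicate because the remainder $E$ is only $O(\nu^{-2})$ while the denominator $\partial_x R_{n,\nu+1}(\rho)$ is itself small, of order $\nu^{-1}$. The quotient $E/\partial_x R_{n,\nu+1}(\rho)$ is controlled precisely by the limiting zero $y_j$ being simple, since after rescaling one has $\partial_x R_{n,\nu+1}(\rho)\sim \nu^{-1}Q_n'(y_j)$ with $Q_n'(y_j)\ne0$; this keeps the denominator bounded away from $0$ at the correct scale and makes $E/\partial_x R_{n,\nu+1}(\rho)=O(\nu^{-1})=o(1)$. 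Thus the real work is verifying the simplicity of the zeros of $Q_n=U_n(1/\,\cdot\,)$ together with the uniformity, over all indices $k$, of the digamma remainder estimates.
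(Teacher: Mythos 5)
Your proposal is correct, but it takes a genuinely different route from the paper. The paper never identifies the limit: it cites Grosjean's bound $\rho>j_{\nu,1}$, Ismail's results that $\rho(\nu)$ is differentiable with $\rho(\nu)/(\nu+1)$ decreasing, and Olver's asymptotic $j_{\nu,1}=\nu+O(\nu^{1/3})$ to conclude that $\lim_{\nu\to\infty}d\rho/d\nu$ exists, equals $\lim_{\nu\to\infty}\rho(\nu)/(\nu+1)$, and is at least $1$; it then excludes the value $1$ by contradiction, substituting $x=\rho$ into the recurrence \eqref{Lommel1} to obtain \eqref{lem1} and evaluating $\lim_{\nu\to\infty}R_{m,\nu+1}(\rho)/2^m$ via the hypergeometric form and Chu--Vandermonde, which forces $1=2n/(n-1)$, impossible for $n\ge2$. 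You instead work directly from the explicit formula \eqref{L}: implicit differentiation plus uniform digamma asymptotics give $d\rho/d\nu=\rho/\nu+o(1)$, and the rescaling $x=\nu y$ identifies the limiting polynomial as $U_n(1/y)$, so that the $j$-th positive zero satisfies $\lim d\rho/d\nu = \sec\!\left(j\pi/(n+1)\right)\in(1,\infty)$ for $1\le j<(n+1)/2$ (nonempty exactly when $n\ge2$). Your approach buys more: it is self-contained (no appeal to Ismail's monotonicity or Grosjean's inequality), it produces the exact value of the limit rather than only its membership in $(1,\infty)$, and it makes explicit what the paper relegates to a remark, namely that $\eta$ solves ${}_2F_1\!\left[(1-n)/2,\,-n/2;\,-n;\,\eta^2\right]=0$ --- your Chebyshev identification is precisely this equation, since $U_n(t)=(2t)^n\,{}_2F_1\!\left[(1-n)/2,\,-n/2;\,-n;\,t^{-2}\right]$. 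What the paper's route buys is brevity and the avoidance of the two technical points you correctly flagged: existence of the limit comes for free from the cited monotonicity, whereas you must control $E/\partial_x R_{n,\nu+1}(\rho)$ through simplicity of the zeros of $U_n$, and your Hurwitz step additionally needs the rescaled zeros $\rho(\nu)/\nu$ to stay in a compact subset of $(0,\infty)$ --- which does hold, since the leading coefficient of the polynomial $y^n R_{n,\nu+1}(\nu y)$ converges to a nonzero constant, and the count of positive zeros of $R_{n,\nu+1}$ matches that of $U_n(1/y)$. With those two standard verifications written out (plus the observation that, for large $\nu$, differentiability of $\rho(\nu)$ follows from the implicit function theorem because $\partial_x R_{n,\nu+1}(\rho)\sim Q_n'(y_j)/\nu\neq0$), your argument is complete.
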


\begin{proof}
We initiate from the property of a zero $\rho$ of the Lommel polynomial $R_{n,\nu+1}$ that 
\begin{equation}\label{order}
    \rho > j_{\nu, 1} \for \nu>-1,
\end{equation}
which can be found in \cite[p. 375]{Grosjean}. Moreover, Ismail \cite[Theorem 4 and 5]{Ismail1} proved that, for $\nu>-1$, $\rho$ is differentiable and the function $\rho(\nu)/(\nu+1)$ is decreasing. On account of these properties and the asymptotic behavior of the zeros of Bessel functions (see \cite[\S 8]{Olver}), stated that $j_{\nu,k} = \nu + O(\nu^{1/3})$ as $\nu \to \infty$, we conclude that $\lim_{\nu \to\infty}\frac{d\rho}{d\nu}$ exists and satisfies
\begin{equation*}
    \lim_{\nu \to\infty}\frac{d\rho}{d\nu} = \lim_{\nu \to \infty} \frac{\rho(\nu)}{\nu+1} \ge \lim_{\nu \to \infty} \frac{j_{\nu,1}}{\nu+1} =1.
\end{equation*}

We now suppose, on the contrary, that $\eta :=\lim_{\nu \to \infty}\frac{d\rho}{d\nu} = 1$. Then it follows from the equation \eqref{Lommel1} with $x=\rho$ that
\begin{equation}\label{lem1}
    1 = \lim_{\nu \to \infty} \frac{\rho(\nu)}{\nu+n} = 2 \lim_{\nu \to \infty} \frac{R_{n-1,\nu+1}(\rho)}{R_{n-2,\nu+1}(\rho)}.
\end{equation}
On the other hand, by the definition of $R_{m,\nu+1}$ in hypergeometric series (see \cite[\S 9.62 (5)]{Watson2}) and Chu-Vandermonde identity, we find that for each $m\in \mathbb{N}$,
\begin{align}
    \lim_{\nu \to \infty} {R_{m,\nu+1}(\rho)}/{2^m} &=   {}_2F_1\left[\begin{array}{c} (1-m)/2,\, -m/2 \\
    -m \end{array}\biggr| 1 \right] \notag \\
    &= \begin{cases}
    \frac{(-(m+1)/2)_{m/2}}{(-m)_{m/2}}, & \ifz m\text{ is even},\\
    \frac{(-m/2)_{(m-1)/2}}{(-m)_{(m-1)/2}}, & \ifz m\text{ is odd}. \label{lem2}
    \end{cases}
\end{align}
We note that $R_{0,\nu+1} \equiv 1$ and $(a)_n$ denotes the Pochhammer's symbol, defined as $(a)_n = a(a+1)\cdots(a+n-1)$, $n \in \mathbb{N}$ and $\rb{a}_0=1$. Hence it can be deduced from \eqref{lem1} and \eqref{lem2} that
\begin{equation*}
    1 = 2 \lim_{\nu \to \infty} \frac{R_{n-1,\nu+1}(\rho)}{R_{n-2,\nu+1}(\rho)} =  \frac{2n}{n-1},
\end{equation*}
which is a contradiction for $n\ge2$, and therefore completes the proof.
\end{proof}

\begin{remark}
    Let us define $\eta = \lim_{\nu \to \infty}\frac{d\rho}{d\nu}$. With this definition, the equation \eqref{lem1} can be reformulated as
    \begin{equation*}
        {}_2F_1\left[\begin{array}{c} (1-m)/2,\, -m/2 \\
        -m \end{array}\biggr| \,\eta^2 \right] = 0.
    \end{equation*}
    Notably, Driver and M{\"o}ller \cite[Theorem 3.1 and Colloary 3.2]{Driver2} investigated the zeros of hypergeometric polynomials ${}_2F_1(-n,\, b;\, -2n;\, x)$. These results offer an alternative proof for Lemma \ref{BLem1}, leading that $\eta$ must be on $\rb{1,\infty}$.
\end{remark}

We shall prove that Theorem \ref{B} cannot be improved for arbitrary irrational values of $\nu$. 
\begin{theorem}\label{thm:2.1}
    For each $m \geq 3$, there exists $\nu^*\in (-1,\infty) \setminus \mathbb{Q}$ such that $\mathcal{N}_{m,\nu^*} \ne \emptyset$, \rm{i.e.},
    $J_{\nu^*}$ and $J_{\nu^*+m}$ share at least one common zero.
\end{theorem}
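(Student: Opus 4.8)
The plan is to reduce the statement to a single coincidence between a Bessel zero and a Lommel zero, and then to upgrade that coincidence to an \emph{irrational} parameter via the rigidity in Observation \ref{ob2}. Write $n := m-1$, so $n \geq 2$. By Observation \ref{ob1}, exhibiting a common zero of $J_{\nu^*}$ and $J_{\nu^*+m}$ amounts to finding $\nu^* \in (-1,\infty)$ and an index $k$ for which the positive zero $j_{\nu^*,k}$ of $J_{\nu^*}$ is simultaneously a zero of $R_{n,\nu^*+1}$. The decisive point is that \emph{any} such $\nu^* > -1$ is automatically irrational: by Observation \ref{ob2} the set $\mathcal{N}_{m,\nu}$ is empty for every rational $\nu$, the sole potential exception being $\nu \in \mathbb{Z}$ with $-2\nu = m$, i.e. $\nu = -m/2 \leq -3/2$, which is excluded by $\nu^* > -1$. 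Thus it suffices to produce one coincidence in $(-1,\infty)$.

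To do so, I would track the smallest positive zero $\rho(\nu) := \rho_{n,\nu,1}$ of $R_{n,\nu+1}$, a continuous (in fact differentiable, by the Ismail result quoted in Lemma \ref{BLem1}) function of $\nu$ on $(-1,\infty)$ that stays bounded away from the origin since $\rho(\nu) > j_{\nu,1}$ by \eqref{order}; the zeros $j_{\nu,k}$ are likewise continuous in $\nu$ by Watson's formula \eqref{W}. Consider the integer-valued index
\[
   \iota(\nu) := \#\{\, k \geq 1 : j_{\nu,k} < \rho(\nu) \,\},
\]
which is finite because $j_{\nu,k} \to \infty$. On the complement of the coincidence set $S := \{\,\nu : \rho(\nu) = j_{\nu,k} \text{ for some } k\,\}$ the functions $\rho(\nu)$ and $j_{\nu,k}$ are continuous and never equal, so $\iota$ is locally constant there. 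Since $(-1,\infty)$ is connected, $S = \emptyset$ would force $\iota$ to be globally constant; hence it is enough to show that $\iota$ is \emph{not} constant, for then $S \neq \emptyset$ and every $\nu^* \in S$ is the sought coincidence.

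Non-constancy is where Lemma \ref{BLem1} enters. Suppose $\iota \equiv \iota_0$; note $\iota_0 \geq 1$ by \eqref{order}. Then $\rho(\nu) < j_{\nu,\iota_0+1}$ for all $\nu > -1$, where the index $\iota_0+1$ is \emph{fixed}. For a fixed index the Olver asymptotic gives $j_{\nu,\iota_0+1} = \nu + O(\nu^{1/3}) \sim \nu$, whereas Lemma \ref{BLem1} (combined with the monotonicity of $\rho(\nu)/(\nu+1)$) yields $\rho(\nu)/(\nu+1) \to \eta \in (1,\infty)$, so $\rho(\nu) \sim \eta(\nu+1)$ with $\eta > 1$. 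Therefore $\rho(\nu)/j_{\nu,\iota_0+1} \to \eta > 1$, contradicting $\rho(\nu) < j_{\nu,\iota_0+1}$ for large $\nu$. Hence $\iota$ varies, $S \neq \emptyset$, and the theorem follows. I expect the main obstacle to be this non-constancy step: one must be certain that the \emph{smallest} Lommel zero still outgrows every fixed-index Bessel zero, which is exactly the content of the strict bound $\eta > 1$ in Lemma \ref{BLem1}, and one must guarantee at the outset that for all $\nu > -1$ the polynomial $R_{n,\nu+1}$ possesses a genuine positive real zero depending continuously on $\nu$, so that the branch $\rho(\nu)$ is well defined; both are supplied by the cited reality and monotonicity properties of the Lommel zeros.
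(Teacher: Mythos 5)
Your proposal is correct and follows essentially the same route as the paper: both force a crossing between a Lommel zero $\rho(\nu)$ and a fixed-index Bessel zero $j_{\nu,k}$ by combining continuity in $\nu$ with the growth comparison supplied by Lemma \ref{BLem1} and Olver's asymptotic $j_{\nu,k}=\nu+O(\nu^{1/3})$, and then invoke Siegel's result (Theorem \ref{B}) to conclude the crossing parameter is irrational. The only difference is packaging: the paper applies the intermediate value theorem to the difference $d(\nu)=\rho_{m-1,\nu,\ell}-j_{\nu,k}$, while you run the equivalent argument via a locally constant counting function on the connected interval $(-1,\infty)$.
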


\begin{proof}
Let $m \ge 3$, $\ell\ge1$ be fixed. We consider the $\ell$-th zero $\rho_{m,\nu,\ell}$ of the Lommel polynomial $R_{m,\nu+1}$. 
It is well-known facts that $j_{\nu,k}$ is a continuously differentiable function with respect to $\nu$, and it tends towards infinity as $k \to \infty$.
Given a fixed $\nu_0>-1$, we choose a specific $k$ such that $\rho_{m-1,\nu_0,\ell}< j_{\nu_0,k}$ and define $d(\nu) := \rho_{m-1,\nu,\ell} - j_{\nu,k}$. This function is continuously differentiable with respect to $\nu$ and has the property that $d(\nu_0)<0$. 
Lemma \ref{BLem1} implies that $d(\nu)/\nu > 0$ as $\nu \to \infty$. Consequently, we conclude that $d(\nu)>0$ for sufficiently large $\nu$. This leads to the existence of $\nu^*\in \rb{-1,\infty}$ such that $\rho_{m-1,\nu^*,\ell} = j_{\nu^* ,k}$ which yields the desired result by considering that $\nu^*$ is irrational due to Theorem \ref{B}.
\end{proof}

\begin{remark}
Indeed, on inspecting the proof of Theorem \ref{thm:2.1}, we could find countable number of $\nu^*$ for each $m\ge3$. The result of Theorem \ref{thm:2.1} can also be extended to the cylinder function $C_{\nu}^\alpha$.
\end{remark}

\begin{figure}[!ht]
    \centering
    \includegraphics[width=0.75\textwidth]{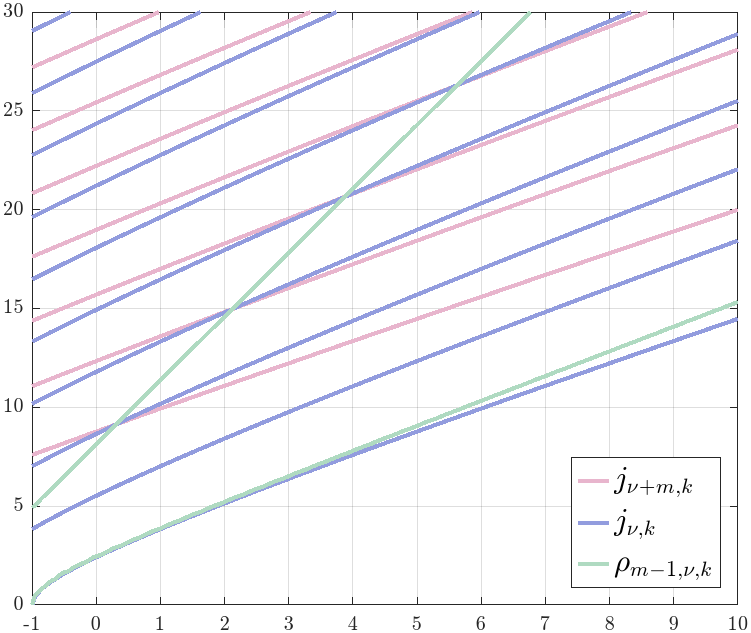}
    \caption{The trajectories of the positive zeros in the $(\nu,x)$-plane in the case of $m=5$, depicting how they collectively approach and share common zeros.}
\end{figure}

\section{Interlacing property between the Bessel functions and the Lommel polynomials}

In this section, we investigate the interlacing property between Bessel functions and Lommel polynomials, elucidating their interaction based on common zeros. To begin our analysis, we establish a Wronskian formula between $J_\nu$ and $R_{m-1,\nu+1}J_{\nu+m}$.
\begin{proposition}\label{Wron1}
For $\nu>-1$ and $m\in \mathbb{N}$, we have
\begin{align*}
    W\qb{J_\nu,\, R_{m-1,\nu+1}J_{\nu+m}}(x)=2 J_{\nu+m}^2(x) \left[ R_{m-1,\nu+1}^2(x)  \sum_{k=1}^\infty \frac{x^2+j_{\nu+m, k}^2}{\big( x^2 - j_{\nu+m, k}^2 \big)^2  } \right. \hspace{11.8pt}\, \\
    \pushright{  \left. + \frac{1}{x^2} \sum_{k=0}^{m-1} \rb{\nu+k+1} R_{k,\nu+1}^2(x)\right].}
\end{align*}
\end{proposition}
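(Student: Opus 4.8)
The plan is to split the Wronskian into one purely ``Bessel'' piece and one purely ``Lommel'' piece by using a descending three-term recurrence to eliminate $J_\nu$ in favour of $J_{\nu+m-1}$ and $J_{\nu+m}$, and then to evaluate each piece independently. The organizing identity I would aim for first is the factorization
\[
W\qb{J_\nu,\, R_{m-1,\nu+1}J_{\nu+m}}(x) = R_{m-1,\nu+1}^2(x)\, W\qb{J_{\nu+m-1}, J_{\nu+m}}(x) + W\qb{R_{m-1,\nu+1}, R_{m-2,\nu+1}}(x)\, J_{\nu+m}^2(x).
\]

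To obtain it, the key preliminary step is the descending recurrence $J_\nu = R_{m-1,\nu+1}J_{\nu+m-1} - R_{m-2,\nu+1}J_{\nu+m}$, which I would derive by applying \eqref{BL1} with the substitution $\mu=\nu+m$, $n=-m$ and then converting the negative-index Lommel polynomials through Graf's formula \eqref{Graf} (namely $R_{-m,\nu+m}=-R_{m-2,\nu+1}$ and $R_{-m-1,\nu+m+1}=-R_{m-1,\nu+1}$). Substituting this into the Wronskian and invoking bilinearity together with the elementary identity $W[fu,fv]=f^2W[u,v]$, the mixed $J_{\nu+m-1}J_{\nu+m}$ contributions cancel and the factorization above should drop out, with the two consecutive orders $\nu+m-1,\nu+m$ isolated in one term and the two consecutive Lommel polynomials isolated in the other.

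Next I would evaluate the two surviving pieces. For the Bessel Wronskian, the recurrence $J_{\nu+m-1}=J_{\nu+m}'+\tfrac{\nu+m}{x}J_{\nu+m}$ gives $W[J_{\nu+m-1},J_{\nu+m}]=(J_{\nu+m}')^2-J_{\nu+m}''J_{\nu+m}+\tfrac{\nu+m}{x^2}J_{\nu+m}^2$; on the other hand, differentiating the Mittag--Leffler expansion of $J_{\nu+m}'/J_{\nu+m}$ (legitimate since $\nu+m>0$, so all its zeros are real and the Hadamard product applies) yields
\[
2J_{\nu+m}^2\sum_{k=1}^\infty\frac{x^2+j_{\nu+m,k}^2}{\bigl(x^2-j_{\nu+m,k}^2\bigr)^2} = (J_{\nu+m}')^2 - J_{\nu+m}''\,J_{\nu+m} - \frac{\nu+m}{x^2}J_{\nu+m}^2,
\]
so that the two quantities differ precisely by $\tfrac{2(\nu+m)}{x^2}J_{\nu+m}^2$. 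For the Lommel Wronskian I would prove by telescoping the finite identity
\[
\frac{2(\nu+m)}{x^2}R_{m-1,\nu+1}^2 + W\qb{R_{m-1,\nu+1}, R_{m-2,\nu+1}} = \frac{2}{x^2}\sum_{k=0}^{m-1}(\nu+k+1)R_{k,\nu+1}^2,
\]
whose engine is the one-step relation $W[R_{k,\nu+1},R_{k-1,\nu+1}]-W[R_{k-1,\nu+1},R_{k-2,\nu+1}]=\tfrac{2(\nu+k)}{x^2}R_{k-1,\nu+1}^2$. This relation falls out of \eqref{Lommel1} (taken at order $\nu+1$) after substituting $R_{k-2,\nu+1}=\tfrac{2(\nu+k)}{x}R_{k-1,\nu+1}-R_{k,\nu+1}$, and the telescoping bottoms out at $W[R_{0,\nu+1},R_{-1,\nu+1}]=W[1,0]=0$. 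Feeding the two evaluations back into the factorization reproduces the stated formula.

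The main obstacle is the bookkeeping in the first step: getting the descending recurrence and the Graf conversions right, and verifying that the cross terms cancel so that the Wronskian truly factors as a Bessel square times $W[J_{\nu+m-1},J_{\nu+m}]$ plus $J_{\nu+m}^2$ times a Lommel Wronskian. Once that factorization is secured the rest is routine: the Bessel piece is a standard consequence of the Mittag--Leffler product (the only analytic point being term-by-term differentiation, which is classical for $\nu+m>0$), and the Lommel piece is a purely algebraic telescoping identity driven by \eqref{Lommel1}. I would also keep a running check at $m=1$, where $R_{m-1,\nu+1}=R_{0,\nu+1}\equiv1$ and the whole formula must collapse to the known expansion of $W[J_\nu,J_{\nu+1}]$, confirming the constants.
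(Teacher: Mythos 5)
Your proposal is correct, and its skeleton coincides with the paper's: the same descending recurrence $J_\nu = R_{m-1,\nu+1}J_{\nu+m-1} - R_{m-2,\nu+1}J_{\nu+m}$ obtained from \eqref{BL1} via Graf's formula \eqref{Graf}, the same splitting of the Wronskian into $R_{m-1,\nu+1}^2\,W[J_{\nu+m-1},J_{\nu+m}]$ plus a Lommel--Wronskian term times $J_{\nu+m}^2$ (the paper's \eqref{BL3}, up to the antisymmetry $W[f,g]=-W[g,f]$), and the same telescoping of \eqref{Lommel1} down to $R_{-1,\nu+1}\equiv 0$ for the Lommel piece (the paper's \eqref{BL5}; you merely absorb the boundary term $\tfrac{2(\nu+m)}{x^2}R_{m-1,\nu+1}^2$ into the sum at the end rather than at the start). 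Where you genuinely diverge is the evaluation of $W[J_{\nu+m-1},J_{\nu+m}]$: the paper invokes an external partial-fraction expansion theorem \eqref{partial1} from \cite{CCP}, specializes it to consecutive orders, and combines it with the identity $\mathbb{J}_\nu(j_{\nu+1,k}) = \tfrac{j_{\nu+1,k}}{2(\nu+1)}\mathbb{J}_{\nu+1}'(j_{\nu+1,k})$, whereas you derive the same formula \eqref{BL4} from the recurrence $J_{\nu+m-1}=J_{\nu+m}'+\tfrac{\nu+m}{x}J_{\nu+m}$ together with the Hadamard product of $J_{\nu+m}$, differentiating the logarithmic derivative term by term (valid since $\nu+m>0$, indeed since $\nu+m>-1$, so all zeros are real and $\sum_k j_{\nu+m,k}^{-2}<\infty$ gives local uniform convergence). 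Your route is more self-contained --- it replaces a citation to a preprint by a classical factorization argument at the cost of one routine analytic justification --- while the paper's route packages that analysis into the quoted expansion; both land on the identical formula, as your $m=1$ sanity check confirms.
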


\begin{proof}
In view of the Graf's formula \eqref{Graf}, the equation \eqref{BL1} can be reformulated as
\begin{equation}\label{BL2}
    J_{\nu}(x) = -R_{m-2,\nu+1}(x) J_{\nu+m}(x) +  R_{m-1,\nu+1}(x) J_{\nu+m-1}(x).
\end{equation}
We rewrite the Wronskian $W\qb{J_\nu,\, R_{m-1,\nu+1}J_{\nu+m}}(x)$ as
\begin{equation*}
    \frac{W\qb{J_\nu,\, R_{m-1,\nu+1}J_{\nu+m}}(x)}{R_{m-1,\nu+1}^2(x)J_{\nu+m}^2(x)} = - \frac{d}{dx}\frac{J_\nu(x)}{R_{m-1,\nu+1}(x)J_{\nu+m}(x)},
\end{equation*}
which immediately gives, by substituting the right-hand side of the equation \eqref{BL2} for $J_\nu$,
\begin{equation}\label{BL3}
    \begin{split}
        W\qb{ J_\nu,\, R_{m-1,\nu+1}J_{\nu+m} }(x)
        = -J_{\nu+m}^2(x) W\qb{ R_{m-2,\nu+1}, R_{m-1,\nu+1} }(x)\\
        + R_{m-1,\nu+1}^2(x) W\qb{ J_{\nu+m-1}, J_{\nu+m} }(x).
    \end{split}
\end{equation}

Regarding an expansion formula for the Wronskian $W\qb{ J_{\nu+m-1}, J_{\nu+m} }$, we invoke a result \cite{CCP} regarding the partial fraction expansion of the Bessel functions, which states the following:
\begin{equation}\label{partial1}
    \mathbb{J}_\lambda(x) =\mathbb{J}_\mu(x)\qb{ 1 + x
    \sum_{k=1}^\infty \frac{\mathbb{J}_\lambda\rb{j_{\mu, k}}}{j_{\mu, k}\, \mathbb{J}_{\mu}'\rb{j_{\mu, k}}} \rb{ \frac{1}{x-j_{\mu,k}} + \frac{1}{x+j_{\mu,k}}}  },
\end{equation}
valid in the range $\lambda>-1\,$ and $-1<\mu<\lambda+2$, where $\mathbb{J}_\nu$ is defined as
\begin{equation*}
    \mathbb{J}_\nu(x) = \Gamma\rb{\nu+1}\rb{\frac{x}{2}}^{-\nu} J_\nu(x).
\end{equation*}
If we take $\mu=\nu+1$, $\lambda = \nu$, the expansion \eqref{partial1} reduces to
\begin{equation}\label{partial2}
    \frac{\mathbb{J}_{\nu}(x)}{\mathbb{J}_{\nu+1}(x)} = 1 + x
    \sum_{k=1}^\infty \frac{ \mathbb{J}_\nu\rb{j_{\nu+1, k}} }{j_{{\nu+1}, k}\, \mathbb{J}_{\nu+1}'\rb{j_{\nu+1, k}} } \rb{ \frac{1}{x-j_{{\nu+1},k}} + \frac{1}{x+j_{{\nu+1},k}}}.
\end{equation}
By making use of the expansion \eqref{partial2} and the following identities
\begin{align*}
    \mathbb{J}_\nu\rb{j_{\nu+1, k}} &= \frac{j_{\nu+1, k}}{2\rb{\nu+1}}  \mathbb{J}_{\nu+1}'\rb{j_{\nu+1, k}}, \\
    W\qb{ J_{\nu+m-1}, J_{\nu+m} }(x) &= - J_{\nu+m}^2(x) \frac{d}{dx} \frac{J_{\nu+m-1} (x)}{J_{\nu+m}(x)} \\
    &= -2\rb{\nu+m} J_{\nu+m}^2(x) \frac{d}{dx} \rb{\frac{\mathbb{J}_{\nu+m-1}(x)}{x\,\mathbb{J}_{\nu+m}(x)}},
\end{align*}
which is readily verified, it is straightforward to see that
\begin{equation}\label{BL4}
    W\qb{ J_{\nu+m-1}, J_{\nu+m} }(x) =J_{\nu+m}^2(x) \qb{ \frac{2\rb{\nu+m}}{x^2} + 2\sum_{k=1}^\infty \frac{x^2+j_{\nu+m, k}^2}{\big( x^2 - j_{\nu+m, k}^2 \big)^2  } }.
\end{equation}

To deal with the Wronskian $W\qb{ R_{m-2,\nu+1}, R_{m-1,\nu+1} }(x)$, we next turn to analyzing the recurrence of Lommel polynomials \eqref{Lommel1} in accordance of parameters with $m \mapsto m-2$ and $\nu \mapsto \nu+1$, given by
\begin{equation*}
    R_{m-3,\nu+1}(x) + R_{m-1,\nu+1}(x) = \frac{2\rb{\nu+m-1}}{x}R_{m-2,\nu+1}(x).
\end{equation*}
Dividing both sides by $R_{m-2,\nu+1}$ and differentiating, it is a simple matter to write
\begin{multline*}
    W\qb{ R_{m-2,\nu+1}, R_{m-1,\nu+1} }(x) = W\qb{ R_{m-3,\nu+1}, R_{m-2,\nu+1} }(x)\\
    - \frac{2\rb{\nu+m-1}}{x^2} R_{m-2,\nu+1}^2(x).
\end{multline*}
Repeated application of the above, along with the fact that $R_{-1,\nu} \equiv 0$, enables us to find
\begin{equation}\label{BL5}
    W\qb{ R_{m-2,\nu+1}, R_{m-1,\nu+1} }(x)
    = - \frac{2}{x^2} \sum_{k=0}^{m-2} \rb{\nu+k+1} R_{k,\nu+1}^2(x).
\end{equation}
Combining \eqref{BL3}, \eqref{BL4} and \eqref{BL5}, the proof is complete.
\end{proof}

For $\nu>-1,$ $R_{m-1,\nu+1}$ and $R_{m-2,\nu+1}$ do not share any common zeros. If they did, according to the recurrence \eqref{Lommel1}, we find that $R_{k,\nu+1}$ vanishes simultaneously for all $k\in \mathbb{Z}$ at these common zeros, which contradict to $R_{0,\nu+1}\equiv 1$.
Hence the non-negativity of the aforementioned Wronskian is apparent. Furthermore, for $x>0$, it vanishes at $x\in \mathcal{N}$ and
\begin{equation*}
    W\qb{J_\nu,\, R_{m-1,\nu+1}J_{\nu+m}}(x)>0 \quad x \notin \mathcal{N}.
\end{equation*}
When $\mathcal{N}$ is an empty set, the Wronskian remains positive.

To facilitate our analysis, we introduce some important definitions.
\begin{definition}\label{def:3.1}
Let 
\begin{align*}
    \cb{ {j}_{\nu,k_\ell} }_{\ell=1}^{\infty} &= \cb{ j_{\nu,k} }_{k=1}^{\infty} \setminus \mathcal{N}_{m,\nu},  \\
    \cb{ \omega_{m,\nu,k} }_{k=1}^{\infty} &= \cb{j_{\nu+m,k}}_{k=1}^\infty \cup \cb{ \rho_{m-1,\nu,k} }_{k\ge 1},
\end{align*}  
where those are arranged in ascending order of magnitude.
\end{definition}
We present one of our main theorems as follows:
\begin{theorem}\label{thm:3.1}
For $\nu > -1$ and $m\in \mathbb{N}$, if $\mathcal{N}_{m,\nu}=\emptyset$, the positive zeros of $J_\nu$ and $R_{m-1,\nu+1}J_{\nu+m}$ are interlaced according to the following pattern
\begin{equation*}
    0 < j_{\nu,1} < \omega_{m,\nu,1} < j_{\nu,2} < \omega_{m,\nu,2} < \cdots.
\end{equation*}
\end{theorem}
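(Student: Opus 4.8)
The plan is to exploit the Wronskian formula from Proposition \ref{Wron1}, which under the hypothesis $\mathcal{N}_{m,\nu}=\emptyset$ is strictly positive on $(0,\infty)$. The strategy is a standard Sturm-type separation argument: a strictly signed Wronskian $W[f,g]$ forces the zeros of $f$ and $g$ to strictly interlace. First I would set $f = J_\nu$ and $g = R_{m-1,\nu+1}J_{\nu+m}$, and observe that the positive zeros of $g$ are exactly the union $\{j_{\nu+m,k}\}\cup\{\rho_{m-1,\nu,k}\}$, i.e. the sequence $\{\omega_{m,\nu,k}\}$ of Definition \ref{def:3.1}. Since $\mathcal{N}_{m,\nu}=\emptyset$, the zeros $\{j_{\nu,k}\}$ of $f$ are disjoint from those of $g$, and moreover $J_{\nu+m}$ and $R_{m-1,\nu+1}$ have no common positive zeros (by the same recurrence argument following Proposition \ref{Wron1} together with Observation \ref{ob1}), so every $\omega_{m,\nu,k}$ is a simple zero of $g$ and the two zero-sets never coincide.

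Next I would record the sign mechanism. At a zero $a$ of $f$ we have $W[f,g](a) = -f'(a)g(a)$, and at a zero $b$ of $g$ we have $W[f,g](b) = f(b)g'(b)$; strict positivity of the Wronskian then pins down the signs of $g$ at consecutive zeros of $f$ and of $f$ at consecutive zeros of $g$. Concretely, between two consecutive zeros $j_{\nu,k}$ and $j_{\nu,k+1}$ of $J_\nu$, the factor $f'$ changes sign, so positivity of $-f'(a)g(a)$ forces $g$ to take opposite signs at the two endpoints; by the intermediate value theorem $g$ has an odd number of zeros in the open interval, hence at least one $\omega_{m,\nu,k}$. The symmetric argument shows $f$ has at least one zero strictly between consecutive $\omega$'s. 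To upgrade ``at least one'' to ``exactly one'' I would argue that a single $g$-zero cannot lie strictly between two consecutive $f$-zeros while a neighboring interval stays empty, because each such empty interval would violate the sign constraint; counting zeros in $(0,X)$ and letting $X\to\infty$ forces the interleaving to be one-to-one.

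The remaining piece is anchoring the pattern at the origin, that is, establishing $0<j_{\nu,1}<\omega_{m,\nu,1}$ rather than the reverse ordering. Here I would use the small-$x$ asymptotics: near $0$, $J_\nu(x)\sim c\,x^\nu$ with $c>0$, so $J_\nu$ is positive and increasing on $(0,j_{\nu,1})$, while I would check the sign of $g$ on this first interval using the explicit leading behavior of $R_{m-1,\nu+1}(x)J_{\nu+m}(x)$ as $x\to 0^+$ (the Lommel polynomial behaves like $x^{-(m-1)}$ times a positive constant and $J_{\nu+m}\sim x^{\nu+m}$). The sign so obtained, combined with $W[f,g](j_{\nu,1})=-J_\nu'(j_{\nu,1})g(j_{\nu,1})>0$ and $J_\nu'(j_{\nu,1})<0$, determines that the first zero of $g$ comes after $j_{\nu,1}$.

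I expect the main obstacle to be the bookkeeping for the ``exactly one'' count, since $g$ is a product of two functions and I must be sure that the zeros coming from $R_{m-1,\nu+1}$ and those from $J_{\nu+m}$ do not cluster so as to place two $\omega$'s in a single gap between consecutive $J_\nu$-zeros; ruling this out is precisely where the strict positivity of the full Wronskian — not merely its nonvanishing — is essential, because it prevents any double zero or sign-preserving crossing that would break the alternation. Once the strict sign and the simplicity of all zeros are in hand, the interlacing follows by the usual Sturmian counting argument applied on $(0,\infty)$.
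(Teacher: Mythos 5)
Your core argument is the same as the paper's: under $\mathcal{N}_{m,\nu}=\emptyset$ the Wronskian $W[J_\nu,\,g]$ with $g=R_{m-1,\nu+1}J_{\nu+m}$ is strictly positive by Proposition \ref{Wron1}, and a Sturm-type separation then gives an odd number of zeros of $g$ in each gap between consecutive zeros of $J_\nu$, while the symmetric statement (a zero of $J_\nu$ between any two consecutive zeros of $g$) upgrades ``odd'' to ``exactly one.'' Your symmetric step is in fact the paper's uniqueness argument in disguise: the paper notes that $J_\nu(\omega)g'(\omega)=W(\omega)>0$ at two consecutive zeros $\omega,\bar\omega$ of $g$ inside a gap forces $g'(\omega)$ and $g'(\bar\omega)$ to share a sign, contradicting simplicity. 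So the separation machinery in your proposal is sound and matches the paper.

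The genuine gap is in your anchoring step. From $g>0$ as $x\to 0^+$ and $g(j_{\nu,1})>0$ (which you correctly extract from $W(j_{\nu,1})=-J_\nu'(j_{\nu,1})g(j_{\nu,1})>0$ and $J_\nu'(j_{\nu,1})<0$) you conclude that the first zero of $g$ lies beyond $j_{\nu,1}$; but equal signs at the two ends of $(0,j_{\nu,1})$ only exclude an \emph{odd} number of zeros of $g$ there --- a priori $g$ could dip below zero and return, contributing two zeros of $g$ (hence two $\omega$'s) before $j_{\nu,1}$ and breaking the claimed pattern. The hole is closable with tools you already set up: if $g$ had consecutive zeros $b<\bar b$ in $(0,j_{\nu,1})$, then $W(b)=J_\nu(b)g'(b)>0$ and $W(\bar b)=J_\nu(\bar b)g'(\bar b)>0$ together with $J_\nu>0$ on $(0,j_{\nu,1})$ force $g'(b)$ and $g'(\bar b)$ to have the same sign, which is impossible for consecutive simple zeros; combined with the parity argument this yields no zeros at all. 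The paper avoids asymptotics entirely and instead quotes the inequality $\rho_{m-1,\nu,1}>j_{\nu,1}$ of \eqref{order} (due to Grosjean) together with $j_{\nu+m,1}>j_{\nu,1}$, which immediately places all zeros of $g$ beyond $j_{\nu,1}$. A smaller point: simplicity of the zeros of $g$ does not follow, as you suggest, merely from $J_{\nu+m}$ and $R_{m-1,\nu+1}$ having no common zeros; the clean justification (used by the paper, and which you gesture at near the end) is that a multiple zero of $g$ would force $W[J_\nu,\,g]$ to vanish there, contradicting strict positivity.
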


\begin{proof}
For the sake of simplicity, we define
\begin{equation*}
    \phi_{m,\nu}(x) = R_{m-1,\nu+1}(x) J_{\nu+m}(x).
\end{equation*}
On inspecting the Wronskian formula in Proposition \ref{Wron1}, the Wronskian $W\qb{ J_{\nu},\, \phi_{m,\nu} }$ is readily seen to be an entire function with removable singularities at $x= \pm j_{\nu+m,k}$, $k=0,\,1,\,2,\,\ldots\,$. Under the assumption $\mathcal{N}_{m,\nu}=\emptyset$, it is strictly positive for $\nu>-1$, $m\in \mathbb{N}$ and $x \in \R$. Consequently, the zeros of $\phi_{m,\nu}$ are all simple. This is due to the necessity of the Wronskian to vanish at the multiple zeros of $\phi_{m,\nu}$.

We begin with proving the existence of the zeros of $\phi_{m,\nu}$ on each interval partitioned with the zeros of $J_{\nu}$.
Let $j$ and $\bar{j}$ be any two consecutive positive zeros of $J_{\nu}$ with $\bar{j}>j$. It turns out that the zeros of $J_\nu$ are all simple, and thus it is plain to see
\begin{equation*}
    J_{\nu}'\rb{j}J_{\nu}'\rb{\bar{j}} <0.
\end{equation*}
Since $W\qb{J_{\nu},\phi_{m,\nu}}(x) >0$ for all $x>0$, we find that
\begin{equation*}
    \phi_{m,\nu}\rb{j} \phi_{m,\nu}\rb{\bar{j}} = \frac{W\qb{J_{\nu},\phi_{m,\nu}}\rb{j} W\qb{J_{\nu},\phi_{m,\nu}}\rb{\bar{j}} }{ J_{\nu}'\rb{j} J_{\nu}'\rb{\bar{j}} }<0,
\end{equation*}
Hence $\phi_{m,\nu}$ has an odd number of zeros on $\rb{j,\bar{j}}$.

To verify uniqueness, we suppose that $\phi_{m,\nu}$ has at least two zeros on $\rb{j,\bar{j}}$. Let $\omega$ and $\bar{\omega}$ be any consecutive positive zeros of $\phi_{m,\nu}$ on $\rb{j,\bar{j}}$. We observe that
\begin{align*}
    J_{\nu}\rb{\omega}\phi_{m,\nu}'\rb{\omega} &= W\qb{J_{\nu},\phi_{m,\nu}}(\omega) >0, \\ 
    J_{\nu}\rb{\bar{\omega}}\phi_{m,\nu}'\rb{\bar{\omega}} &= W\qb{J_{\nu},\phi_{m,\nu}}(\bar{\omega}) >0,
\end{align*}
which shows that $\phi_{m,\nu}'\rb{w}$ and $\phi_{m,\nu}'\rb{\bar{w}}$ have the same sign as long as $J_{\nu}$ maintains constant sign on $\rb{j,\bar{j}}$. By leveraging the simplicity of the zeros of $\phi_{m,\nu}$ and the continuity of $\phi_{m,\nu}$, it can be readily verified that there exists at least one zero of $\phi_{m,\nu}$ on $\rb{\omega,\bar{\omega}}$. This contradicts the choice of $\omega$ and $\bar{\omega}$. Consequently, whenever a pair of consecutive positive zeros of $J_{\nu}$ is selected, $\phi_{m,\nu}$ has one and only one zero between them.

In view of relations $\rho_{m-1,\nu,1}>j_{\nu,1}$ in \eqref{order} and $j_{\nu+m,1}>j_{\nu,1}$, we deduce that $\phi_{m,\nu}$ has no zeros on $\rb{0,j_{\nu,1}}$. Therefore the desired interlacing property follows.
\end{proof}

In the more general scenario where $\mathcal{N}_{m,\nu}$ is not an empty set, we can formulate the generalized interlacing property between $J_\nu$ and $J_{\nu+m}$, as detailed in the definition \ref{def:3.1}, in the following manner.
\begin{theorem}\label{Inter1}
For $\nu > -1$ and $m\in \mathbb{N}$, the positive zeros of $J_\nu$ and $R_{m-1,\nu+1}J_{\nu+m}$ are interlaced in the sense that
\begin{equation*}
    0 < {j}_{\nu,k_1} < \omega_{m,\nu,1} < {j}_{\nu,k_2} < \omega_{m,\nu,2} < \cdots.
\end{equation*}
\end{theorem}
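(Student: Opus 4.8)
The plan is to upgrade the strictly-positive-Wronskian argument of Theorem~\ref{thm:3.1} to the degenerate case $\mathcal N_{m,\nu}\neq\emptyset$ by tracking the single ratio
\[
    h(x) = -\frac{J_\nu(x)}{\phi_{m,\nu}(x)}, \qquad h'(x)=\frac{W\qb{J_\nu,\,\phi_{m,\nu}}(x)}{\phi_{m,\nu}^2(x)},
\]
where $\phi_{m,\nu}=R_{m-1,\nu+1}J_{\nu+m}$ as in the proof of Theorem~\ref{thm:3.1}. By Proposition~\ref{Wron1} and the positivity discussion following it, the numerator $W\qb{J_\nu,\,\phi_{m,\nu}}$ is nonnegative for $x>0$ and vanishes there exactly on $\mathcal N_{m,\nu}$; since each point of $\mathcal N_{m,\nu}$ is itself a zero of $\phi_{m,\nu}$, it follows that $h'>0$ on every open interval that contains no zero of $\phi_{m,\nu}$. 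Hence $h$ is strictly increasing between consecutive zeros of $\phi_{m,\nu}$, and by Definition~\ref{def:3.1} those zeros are precisely the sequence $\cb{\omega_{m,\nu,k}}$.

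The heart of the matter, and the step I expect to be the main obstacle, is to match the zeros and poles of $h$ with the two sequences in the statement. A zero of $h$ occurs exactly where $J_\nu$ vanishes while $\phi_{m,\nu}$ does not, that is, at the points of $\cb{j_{\nu,k}}\setminus\mathcal N_{m,\nu}=\cb{j_{\nu,k_\ell}}$. For the poles I would argue in two cases: if $\omega_{m,\nu,k}\notin\mathcal N_{m,\nu}$ it is a zero of a single factor of $\phi_{m,\nu}$ at which $J_\nu\neq0$, giving a genuine pole; and if $\zeta=\omega_{m,\nu,k}\in\mathcal N_{m,\nu}$, then by Observation~\ref{ob1} both $J_{\nu+m}$ and $R_{m-1,\nu+1}$ vanish at $\zeta$, so $\phi_{m,\nu}$ vanishes there to strictly higher order than the simple zero of $J_\nu$, and again $h$ has a genuine pole. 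This is exactly why a common zero is \emph{deleted} from the $J_\nu$-side yet \emph{kept once} as an $\omega$ on the statement's right pattern. No delicate multiplicity count is required: because $h$ is increasing on either side of a pole, it must tend to $+\infty$ from the left and to $-\infty$ from the right of every $\omega_{m,\nu,k}$.

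It then remains to assemble the intervals. On each $\rb{\omega_{m,\nu,k},\,\omega_{m,\nu,k+1}}$ the function $h$ increases strictly from $-\infty$ to $+\infty$, so it has exactly one zero there, i.e.\ exactly one point of $\cb{j_{\nu,k_\ell}}$. For the initial interval I would check from the leading behaviour $J_\nu(x)\sim c\,x^{\nu}$ and $\phi_{m,\nu}(x)\sim c'\,x^{\nu+1}$, with $c,c'>0$ for $\nu>-1$, that $h(x)\to-\infty$ as $x\to0^+$, and observe that $j_{\nu,1}\notin\mathcal N_{m,\nu}$ since every zero of $R_{m-1,\nu+1}$ exceeds $j_{\nu,1}$ by \eqref{order}; together with $j_{\nu+m,1}>j_{\nu,1}$ this gives $\omega_{m,\nu,1}>j_{\nu,1}=j_{\nu,k_1}$, so $\rb{0,\omega_{m,\nu,1}}$ carries only the zero $j_{\nu,k_1}$ and no $\omega$. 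Concatenating the intervals in increasing order yields $0<j_{\nu,k_1}<\omega_{m,\nu,1}<j_{\nu,k_2}<\omega_{m,\nu,2}<\cdots$, which is the asserted pattern and reduces to Theorem~\ref{thm:3.1} when $\mathcal N_{m,\nu}=\emptyset$.
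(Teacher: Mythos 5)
Your proposal is correct, and it runs on the same engine as the paper's proof --- the positivity of $W[J_\nu,\,R_{m-1,\nu+1}J_{\nu+m}]$ away from $\mathcal{N}_{m,\nu}$ from Proposition \ref{Wron1} --- but it neutralizes the common zeros by a different device. The paper divides both functions by the monic polynomial $p_{m,\nu}(x)=\prod_{\zeta\in\mathcal{N}_{m,\nu}}(x-\zeta)$, forming $U_{m,\nu}=J_\nu/p_{m,\nu}$ and $V_{m,\nu}=R_{m-1,\nu+1}J_{\nu+m}/p_{m,\nu}$; since each $\zeta\in\mathcal{N}_{m,\nu}$ is a simple zero of $J_\nu$ but an at least double zero of the product, this strips the common zeros from the $J_\nu$ side while keeping them once on the other side, makes $W[U_{m,\nu},V_{m,\nu}]=p_{m,\nu}^{-2}\,W[J_\nu,\,R_{m-1,\nu+1}J_{\nu+m}]$ strictly positive on $(0,\infty)$, and then re-runs the argument of Theorem \ref{thm:3.1} verbatim. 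You instead keep the functions intact and track $h=-J_\nu/\phi_{m,\nu}$: the same multiplicity observation (simple zero of the numerator against a higher-order zero of the denominator) shows $h$ retains a genuine pole at each common zero, and monotonicity between consecutive poles, forced by the nonnegative Wronskian whose zero set lies inside the pole set, gives exactly one zero of $h$ per interval. These are two packagings of one idea --- your $h$ is exactly $-U_{m,\nu}/V_{m,\nu}$, and your pole/zero bookkeeping is precisely what the division by $p_{m,\nu}$ accomplishes --- but your version is somewhat more self-contained: the monotone-ratio picture absorbs both the existence and the uniqueness halves of the Sturm-type argument at once, and the remark that monotonicity on both sides forces $+\infty$ on the left and $-\infty$ on the right of each pole cleanly avoids any sign or parity computation there, whereas the paper's version buys modularity by reducing to the already-settled case $\mathcal{N}_{m,\nu}=\emptyset$. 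One point to make explicit: when $\omega_{m,\nu,k}\notin\mathcal{N}_{m,\nu}$ you assert $J_\nu(\omega_{m,\nu,k})\neq 0$; this is true but needs Observation \ref{ob1} (a zero of $J_\nu$ at which either factor of $\phi_{m,\nu}$ vanishes automatically belongs to $\mathcal{N}_{m,\nu}$), and the same observation is what guarantees your zeros of $h$ are exactly the points $j_{\nu,k_\ell}$.
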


\begin{proof}
Let us consider the auxiliary functions
\begin{equation*}
    U_{m,\nu}(x) = \frac{J_\nu(x)}{ p_{m,\nu}(x) }, \quad V_{m,\nu}(x) = \frac{R_{m-1,\nu+1}(x) J_{\nu+m}(x)}{ p_{m,\nu}(x) },
\end{equation*}
where $p_{m,\nu}$ is a monic polynomial of degree at most $\left\lfloor \frac{m-1}{2} \right\rfloor$, defined as
\begin{equation*}
    p_{m,\nu}(x) = \begin{cases}
        \prod_{\zeta \in \mathcal{N}_{m,\nu}} \rb{x- \zeta} &\ifz \mathcal{N}_{m,\nu} \ne \emptyset,\\
        1 &\ifz \mathcal{N}_{m,\nu} = \emptyset.
    \end{cases} 
\end{equation*}
note that $J_\nu$ has a countably infinite number of zeros which are all simple and real. (see \cite[\S 15]{Watson2})
We observe that
\begin{equation*}
    W\qb{ U_{m,\nu},\, V_{m,\nu} }(x) = \rb{p_{m,\nu}^{2}(x)}^{-1} W\qb{J_\nu,\, R_{m-1,\nu+1}J_{\nu+m}}(x),
\end{equation*}
which is an entire function with removable singularities at $x\in \mathcal{N}_{m,\nu}$. Moreover, it is strictly positive for $\nu>-1$, $m\in \mathbb{N}$ and $x >0$.

The exactly same line of reasoning as in the proof of Theorem \ref{thm:3.1} can be applied to the functions $U_{m,\nu}$ and $V_{m,\nu}$, in conjunction with the positivty of the Wronkskian $W\qb{ U_{m,\nu},\, V_{m,\nu} }$. This extends the established interlacing property.
\end{proof}

\begin{figure}[H]
    \centering
    \includegraphics[width=0.75\textwidth]{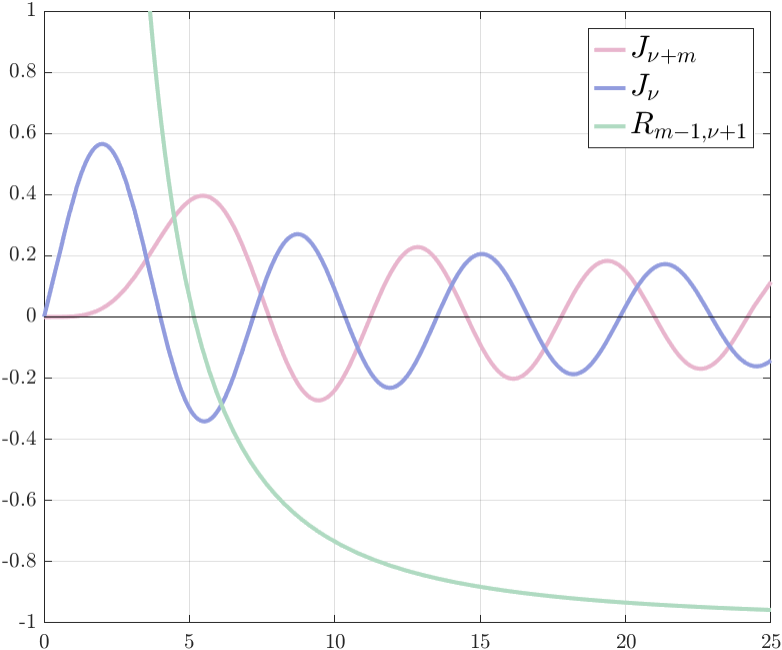}
    \caption{The graphs of $J_{\nu+m}$, $J_\nu$ and $R_{m-1,\nu+1}$ for $\nu=1.125$, $m=3$.}
    \label{fig:3.1}
\end{figure}

\begin{figure}[H]
    \centering
    \includegraphics[width=0.75\textwidth]{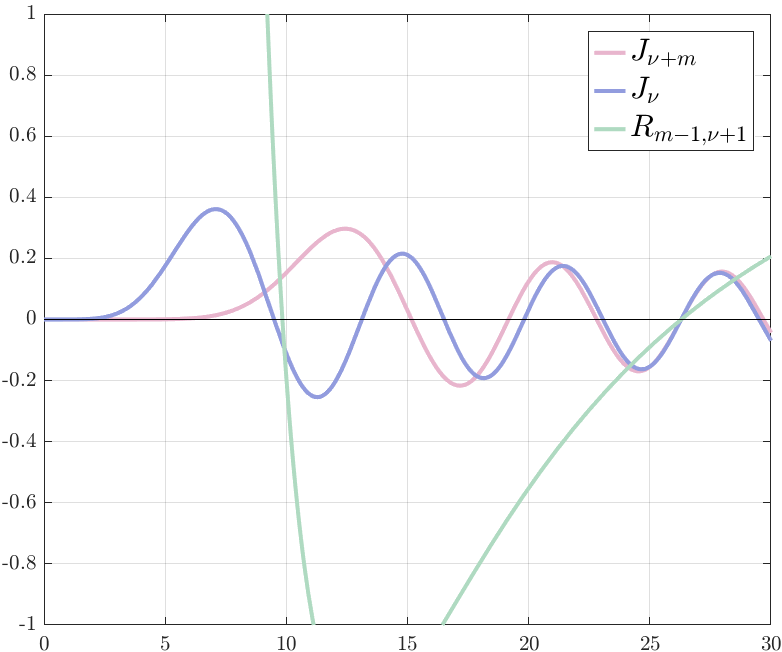}
    \caption{The graphs of $J_{\nu+m}$, $J_\nu$ and $R_{m-1,\nu+1}$ for $\nu=5.619\cdots$, $m=5$.}
    \label{fig:3.2}
\end{figure}

\begin{remark}\
\begin{enumerate}[label=(\roman*)]
    \item In particular, when $m=1$ and $m=2$, we have
    \begin{equation*}
        R_{0,\nu+1}(x)J_{\nu+1}(x) = J_{\nu+1}(x),\quad R_{1,\nu+1}(x)J_{\nu+2}(x) = \frac{2\rb{\nu+1}}{x} J_{\nu+2}(x),
    \end{equation*}
    which shows that Theorem \ref{thm:3.1} precisely reduces to the classical results, presented in \eqref{pattern}.

    \item It is impossible for $\mathcal{N}_{m,\nu}$ to contain two consecutive positive zeros of $J_\nu$. Suppose that $\omega_1,\,\omega_2\in \mathcal{N}_{m,\nu}$ with $\omega_1 = j_{\nu,\ell}$ and $\omega_2 = j_{\nu,\ell+1}$. Then it is apparent that $\omega_1,\,\omega_2 \in \cb{ \omega_{m,\nu,k} }_{k=1}^{\infty}$, and thus we have
    \begin{equation*}
        j_{\nu,\ell-1} < \omega_1 < \omega_2 < j_{\nu,\ell+2},
    \end{equation*}
    which contradicts to Theorem \ref{Inter1}.

    \item Figures \ref{fig:3.1} and \ref{fig:3.2} depict situations where the interlacing property between $J_\nu$ and $J_{\nu+m}$ is not valid, but the generalized interlacing property remains true. To illustrate, for a given positive zero $\rho$ of $R_{m-1,\nu+1}$, it follows
    \begin{equation*}
        j_{\nu+m,k} < j_{\nu,s} < \rho < j_{\nu,s+1} < j_{\nu+m,k+1},
    \end{equation*}
    for some $1\le k < s$. This scenario is illustrated in Figure \ref{fig:3.1} assuming $j_{\nu+m,0}=0$. On the other hand, when considering a given common zero $\zeta = j_{\nu,s}= j_{\nu+m,k}$ with fixed $1\le k < s$, we can observe that
    \begin{equation*}
        j_{\nu+m,k-1} < j_{\nu,s-1} < \zeta = j_{\nu,s} < j_{\nu,s+1} < j_{\nu+m,k+1},
    \end{equation*}
    which is depicted in Figure \ref{fig:3.2}. It's worth noting that the specific parameter $\nu$, as used in Figure \ref{fig:3.2}, is irrational number lying within the interval $\rb{5.619,5.62}$.

\end{enumerate}

\end{remark}

\section{Extension to Cylinder functions}

The generalized interlacing property can be established not only for the Bessel functions but also for the cylinder functions. We recall the definition of cylinder functions $C_\nu^\alpha= \cos{\alpha} J_\nu - \sin{\alpha} Y_\nu$, for real $\nu$ and $0\leq\alpha<\pi$. As the parameter $\alpha$ doesn't affect to the results, we abbreviate $C_\nu^\alpha$ by $C_\nu$. As customary, $\cb{ {c}_{\nu,k} }_{k=1}^{\infty}$ denotes the sequence of all positive zeros of $C_\nu$ arranged in ascending order of magnitude. Note that the cylinder functions also satisfy  \eqref{BL1} and \eqref{BL2}, which can be expressed as
\begin{align}
    &C_{\nu+m}(x)=R_{m,\nu}(x)C_\nu(x)-R_{m-1,\nu+1}(x)C_{\nu-1}(x), \label{CL1}\\
    &C_{\nu}(x)= - R_{m-2,\nu+1}(x) C_{\nu+m}(x)  + R_{m-1,\nu+1}(x) C_{\nu+m-1}(x) , \label{CL2}
\end{align}
and hence the equation \eqref{BL3} in the context of the cylinder function $C_\nu$ can be derived as follows:
\begin{equation}\label{C0}
    \begin{split}
        W\qb{ C_\nu,\, R_{m-1,\nu+1}C_{\nu+m} }(x)
    = -C_{\nu+m}^2(x) W\qb{ R_{m-2,\nu+1}, R_{m-1,\nu+1} }(x)\\
    + R_{m-1,\nu+1}^2(x) W\qb{ C_{\nu+m-1}, C_{\nu+m} }(x).
    \end{split}
\end{equation}

The prominent difference between the case of the Bessel functions $J_\nu$ and that of the cylinder functions $C_\nu$ is related to the positivity of the Wronskian. Specifically, $W[C_\nu,\, R_{m-1,\nu+1}C_{\nu+m}]$ doesn't maintain a constant sign over $(0,\infty)$. In fact, the Wronskian has at least one zero on $\rb{0,\infty}$. In such cases, additional analysis is necessary.

\begin{definition}
Let
\begin{align*}
    \cb{ {c}_{\nu,k_\ell} }_{\ell=1}^{\infty} &= \cb{ c_{\nu,k} }_{k=1}^{\infty} \setminus \mathcal{M}_{m,\nu},  \\
    \cb{ \tau_{m,\nu,k} }_{k=1}^{\infty} &= \cb{c_{\nu+m,k}}_{k=1}^\infty \cup \cb{ \rho_{m-1,\nu,k} }_{k\ge 1},
\end{align*}  
where  those are arranged in ascending order of magnitude and
\begin{equation*}
     \mathcal{M}_{m,\nu} = \cb{ c_{\nu,k}\,:\, C_{\nu+m}\rb{c_{\nu,k}}=0 },\quad m\in \mathbb{N}.
\end{equation*}
\end{definition}

\begin{theorem}
For $\nu>0$ and $m\in \mathbb{N}$, if $\mathcal{M}_{m,\nu}=\emptyset$, the positive zeros of $C_\nu$ and $R_{m-1,\nu+1}C_{\nu+m}$ are interlaced according to the following pattern
\begin{equation}\label{pattern2}
    0<c_{\nu,1}<\tau_{m,\nu,1}<c_{\nu,2}<\tau_{m,\nu,2}<\cdots.
\end{equation}
\end{theorem}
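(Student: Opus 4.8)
The plan is to follow the proof of Theorem~\ref{thm:3.1} essentially verbatim, writing $\Phi_{m,\nu}=R_{m-1,\nu+1}C_{\nu+m}$, and to notice that the argument there never uses positivity of $W[C_\nu,\Phi_{m,\nu}]$ on all of $(0,\infty)$: it evaluates the Wronskian only at the positive zeros of $C_\nu$ (to force an odd number of zeros of $\Phi_{m,\nu}$ in each gap) and at the positive zeros of $\Phi_{m,\nu}$ (to exclude two of them in a gap). Thus, even though $W[C_\nu,\Phi_{m,\nu}]$ changes sign on $(0,\infty)$, it suffices to prove the pointwise statement that, under $\mathcal{M}_{m,\nu}=\emptyset$, this Wronskian is strictly positive at every positive zero of $C_\nu$ and at every positive zero of $\Phi_{m,\nu}$. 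Securing this pointwise positivity is the heart of the matter.

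First I would record the disjointness of the relevant zero families. If $z$ were a common zero of $R_{m-1,\nu+1}$ and $C_{\nu+m}$, then \eqref{CL2} forces $C_\nu(z)=0$, so $z\in\mathcal{M}_{m,\nu}$; if $z$ were a common zero of $C_\nu$ and $R_{m-1,\nu+1}$, then \eqref{CL2} together with the coprimality of $R_{m-1,\nu+1}$ and $R_{m-2,\nu+1}$ again yields $C_{\nu+m}(z)=0$, hence $z\in\mathcal{M}_{m,\nu}$. Since $\mathcal{M}_{m,\nu}=\emptyset$, the three families $\{c_{\nu,k}\}$, $\{c_{\nu+m,k}\}$, $\{\rho_{m-1,\nu,k}\}$ are pairwise disjoint, so $\Phi_{m,\nu}$ has only simple positive zeros and $\{\tau_{m,\nu,k}\}=\{c_{\nu+m,k}\}\cup\{\rho_{m-1,\nu,k}\}$ is a genuine disjoint union. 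For a zero $c$ of $C_\nu$ I would then compute $W[C_\nu,\Phi_{m,\nu}](c)=-C_\nu'(c)\Phi_{m,\nu}(c)$; the recurrence $C_\nu'(c)=C_{\nu-1}(c)$ and \eqref{CL1} at $c$, which reduces to $C_{\nu+m}(c)=-R_{m-1,\nu+1}(c)C_{\nu-1}(c)$, combine to give the clean identity
\begin{equation*}
    W[C_\nu,\Phi_{m,\nu}](c)=R_{m-1,\nu+1}^2(c)\,C_{\nu-1}^2(c)=C_{\nu+m}^2(c).
\end{equation*}
This is strictly positive: $C_{\nu-1}(c)\ne0$ because consecutive-order cylinder functions have no common zeros (a shared zero would be a double zero of $C_\nu$, impossible for a solution of Bessel's equation), and $R_{m-1,\nu+1}(c)\ne0$ by the disjointness just proved.

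At a zero $\omega$ of $\Phi_{m,\nu}$ I would read off the sign from \eqref{C0}. If $C_{\nu+m}(\omega)=0$ the first term drops out and $W[C_\nu,\Phi_{m,\nu}](\omega)=R_{m-1,\nu+1}^2(\omega)C_{\nu+m-1}^2(\omega)>0$; if instead $R_{m-1,\nu+1}(\omega)=0$ the second term drops out and, using \eqref{BL5}, $W[C_\nu,\Phi_{m,\nu}](\omega)=C_{\nu+m}^2(\omega)\cdot\frac{2}{\omega^2}\sum_{k=0}^{m-2}(\nu+k+1)R_{k,\nu+1}^2(\omega)>0$, the surviving factors being nonzero by disjointness and $R_{0,\nu+1}\equiv1$. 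With pointwise positivity in hand, the existence-and-uniqueness step of Theorem~\ref{thm:3.1} transfers without change: for consecutive zeros $c<\bar c$ of $C_\nu$ one finds $\Phi_{m,\nu}(c)\Phi_{m,\nu}(\bar c)=W(c)W(\bar c)/\big(C_\nu'(c)C_\nu'(\bar c)\big)<0$, giving an odd number of zeros of $\Phi_{m,\nu}$ in $(c,\bar c)$, while positivity of the Wronskian at two consecutive zeros of $\Phi_{m,\nu}$ forces $\Phi_{m,\nu}'$ to share sign there, contradicting simplicity and so leaving exactly one.

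Finally I would treat the initial interval, i.e.\ show $\tau_{m,\nu,1}>c_{\nu,1}$, so that $\Phi_{m,\nu}$ has no zero on $(0,c_{\nu,1})$ and the pattern \eqref{pattern2} opens correctly. Watson's formula \eqref{W} gives $dc/d\nu>0$, whence $c_{\nu+m,1}>c_{\nu,1}$; and a short Wronskian computation based on $W[J_\nu,Y_\nu]=2/(\pi x)$ shows the positive zeros of $C_\nu^\alpha$ decrease in the phase, $dc/d\alpha=-2/\big(\pi c\,[(C_\nu^\alpha)'(c)]^2\big)<0$, so that $c_{\nu,1}\le j_{\nu,1}<\rho_{m-1,\nu,1}$ by \eqref{order}. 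I expect the main obstacle to be precisely the loss of a global sign for the Wronskian, the feature that distinguishes the cylinder case from the Bessel case; once it is replaced by the pointwise identity $W[C_\nu,\Phi_{m,\nu}](c)=C_{\nu+m}^2(c)$ at the zeros of $C_\nu$, together with its two analogues at the zeros of $\Phi_{m,\nu}$, the rest is a routine transcription of Section~3.
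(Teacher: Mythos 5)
Your proposal is correct, but it takes a genuinely different route from the paper's proof. The paper accepts the loss of a global sign for $W[C_\nu,\,R_{m-1,\nu+1}C_{\nu+m}]$ and splits $(0,\infty)$ into two pieces: on $(c_{\nu+m-1,1},\infty)$ it restores a constant sign by combining \eqref{C0}, \eqref{BL5} and P\'almai's lemma ($x\,W[C_{\nu+m-1},C_{\nu+m}](x)>0$ beyond the first zero) and then runs the argument of Theorem~\ref{thm:3.1}; on the initial interval $(0,c_{\nu+m,1})$ it carries out a separate combinatorial sign analysis, proving $(-1)^{k+1}R_{m-1,\nu+1}(c_k)>0$ and $(-1)^{\ell}C_\nu(\rho_\ell)>0$ from \eqref{CL1}, so that the zeros of $C_\nu$ and of $R_{m-1,\nu+1}$ interlace there, and finally rules out the count $M=N$ by a case analysis near $c_{\nu+m-1,1}$ and $c_{\nu+m,1}$ using \eqref{CL2}. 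You instead observe that the machinery of Theorem~\ref{thm:3.1} only ever evaluates the Wronskian at the zeros of the two functions involved, and you secure positivity there by exact evaluation: $W(c)=C_{\nu+m}^2(c)$ at zeros of $C_\nu$ (via $C_\nu'(c)=C_{\nu-1}(c)$ and \eqref{CL1}), together with the two analogues at zeros of $R_{m-1,\nu+1}C_{\nu+m}$ read off from \eqref{C0} and \eqref{BL5}; the opening of the pattern is then dispatched by $dc/d\nu>0$ (Watson's formula \eqref{W}) and $c_{\nu,1}\le j_{\nu,1}<\rho_{m-1,\nu,1}$ (phase monotonicity plus \eqref{order}). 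Your version is shorter and more unified: it avoids P\'almai's lemma and the $M=N$ case analysis entirely, and the identity $W(c)=C_{\nu+m}^2(c)$ makes transparent why $\mathcal{M}_{m,\nu}=\emptyset$ is exactly the right hypothesis; the paper's argument, in exchange, yields the extra information that the Wronskian keeps a constant sign on the whole ray $(c_{\nu+m-1,1},\infty)$, which is what it reuses for the common-zero version of the theorem. Two small touch-ups to your write-up: simplicity of the zeros of $R_{m-1,\nu+1}C_{\nu+m}$ does not follow from disjointness of the three zero families alone (that does not by itself exclude a multiple zero of the Lommel polynomial), but it does follow, exactly as in Theorem~\ref{thm:3.1}, from your pointwise positivity of the Wronskian at those zeros, so this claim should come after that computation rather than before it; and the step $c_{\nu,1}\le j_{\nu,1}$ implicitly uses continuity of the first zero in the phase, which either deserves a citation to the classical monotonicity of cylinder-function zeros in $\alpha$ or can be replaced by the two-line sign argument that $C_\nu^\alpha(x)\to+\infty$ as $x\to0^{+}$ while $C_\nu^\alpha(j_{\nu,1})=-\sin\alpha\,Y_\nu(j_{\nu,1})<0$ for $0<\alpha<\pi$.
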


\begin{proof}
We shall investigate the interlacing pattern separately on the intervals $(0,c_{\nu+m,1})$ and $(c_{\nu+m,1},\infty)$. 

For the latter interval, we examine the Wronskian of cylinder functions just as in the previous section. A result due to P\'{a}lmai \cite[Lemma 5]{Palmai} provides, by considering $h^2W[f,g] = W[hf,hg]$, that for $\nu>0$,
\begin{equation}\label{C1}
    x\,W\qb{ C_{\nu+m-1}, C_{\nu+m} }(x)>0,\quad x> \min\rb{c_{\nu+m-1,1},c_{\nu+m,1}}.
\end{equation}
Obviously, we deduce that $\min\rb{c_{\nu+m-1,1},c_{\nu+m,1}} = c_{\nu+m-1,1}$ by \eqref{W}, and also that $W\qb{ R_{m-2,\nu+1}, R_{m-1,\nu+1} }(x)<0$, for $x>0$, by \eqref{BL5}. Therefore, on combining these with \eqref{C0} and \eqref{C1}, we obtain that for $\nu>0$,
\begin{equation*}
    W\qb{ C_\nu,\, R_{m-1,\nu+1}C_{\nu+m} }(x)> 0,\quad x>c_{\nu+m-1,1}.
\end{equation*}
By shifting $x$ by $x-c_{\nu+m-1,1}$ and applying the same argument as in the proof of Theorem \ref{thm:3.1}, it is straightforward to deduce that the positive zeros of $C_\nu$ and $R_{m-1,\nu+1}C_{\nu+m}$ are interlaced within the interval $\rb{c_{\nu+m-1,1},\infty}$, which certainly includes $(c_{\nu+m,1},\infty)$.

Secondly, we shift our focus to the interval $(0,c_{\nu+m,1})$. Since $c_{\nu,1}<c_{\nu+m,1}$, it is obvious that $C_\nu$ has at least one zero in the interval $(0,c_{\nu+m,1})$. If $R_{m-1,\nu+1}$ has no zeros in $(0,c_{\nu+m,1})$, then $C_\nu$ has only one zero in the same interval. This is because, in that case, $R_{m-1,\nu+1}$ and $C_{\nu+m}$ is positive on the interval and $C_{\nu-1}$ is positive at the possible second zero of $C_\nu$, which contradicts to \eqref{CL1} and hence establishes \eqref{pattern2}. 

Now suppose $R_{m-1,\nu+1}$ has at least one zero in $(0,c_{\nu+m,1})$. For the sake of convenience, let $c_1<c_2<\cdots<c_N$ and $\rho_1<\rho_2<\cdots<\rho_M$ be the all zeros of $C_\nu$ and $R_{m-1,\nu+1}$ within the interval $(0,c_{\nu+m,1})$. We claim that for each $k=1,\ld,N$ and $\ell=1,\ld,M$,
\begin{equation*}
    (-1)^{k+1} R_{m-1,\nu+1}(c_k) >0,\quad (-1)^\ell C_{\nu}(\rho_\ell) >0.
\end{equation*}
The first inequality comes from \eqref{CL1}, combined with the basic facts
\begin{equation*}
    (-1)^{k} C_{\nu-1}(c_k)>0,\quad k=1,\ld,N,
\end{equation*}
and
\begin{equation*}
    C_\nu(x)>0,\quad 0<x<c_{\nu,1}.
\end{equation*}
Similarly, the second inequality is obtained from
\begin{equation*}
    (-1)^\ell R_{m,\nu}(\rho_\ell)>0,\quad \ell=1,\ld,M.
\end{equation*}

Thus, in $(0,c_{\nu+m,1})$, the zeros of $C_\nu$ and those of $R_{m-1,\nu+1}$ are interlaced. Furthermore, the first zero of $C_\nu$ precedes that of $R_{m-1,\nu+1}$. Consequently, we are left with two possibilities:
\begin{align*}
    c_1&<\rho_1<c_2<\rho_2<\cdots<\rho_{N-1}<c_N, \tag{when $M=N-1$}\\
    c_1&<\rho_1<c_2<\rho_2<\cdots<\rho_{N-1}<c_N<\rho_N. \tag{when $M=N$}
\end{align*}
Suppose $M=N$. Firstly, the scenario where
\begin{equation*}
    c_{\nu+m-1,1}<c_N<\rho_N<c_{\nu+m,1}
\end{equation*}
is impossible, as it contradicts to the interlacing pattern proved above for $x>c_{\nu+m-1,1}$. For the same reason, the case of $c_N<c_{\nu+m-1,1}<\rho_N$ is also excluded. The only remaining case is $c_N<\rho_N<c_{\nu+m-1,1}$. In this instance, we consider the signs of \eqref{CL2} at $x=c_{\nu+m,1}$. The sign of the left-hand side is $(-1)^{N}$, while that of the right-hand side is $(-1)^{N+1}$, leading to a contradiction.

Therefore, we conclude that $M=N-1$, thus confirming the desired interlacing pattern and completing the proof.
\end{proof}

As in the case of Bessel functions of the first kind, when common zeros might occur, we can establish the following theorem, with a reasoning analogous to the proof of Theorem \ref{Inter1}.

\begin{theorem}
    For $\nu>0$ and $m\in \mathbb{N}$, the positive zeros of $C_\nu$ and\\
    $R_{m-1,\nu+1}C_{\nu+m}$ are interlaced in the sense that
\begin{equation*}
    0<c_{\nu,k_1}<\tau_{m,\nu,1}<c_{\nu,k_2}<\tau_{m,\nu,2}<\cdots.
\end{equation*}
\end{theorem}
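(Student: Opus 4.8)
The plan is to mirror the strategy already used twice in the paper (in Theorem~\ref{Inter1} and in the empty-case cylinder theorem), replacing the Bessel pair $J_\nu,\,R_{m-1,\nu+1}J_{\nu+m}$ by the cylinder pair $C_\nu,\,R_{m-1,\nu+1}C_{\nu+m}$, and dividing out the finitely many common zeros $\mathcal{M}_{m,\nu}$ by a monic polynomial factor. First I would set
\begin{equation*}
    \widetilde{U}_{m,\nu}(x) = \frac{C_\nu(x)}{q_{m,\nu}(x)}, \qquad \widetilde{V}_{m,\nu}(x) = \frac{R_{m-1,\nu+1}(x)\,C_{\nu+m}(x)}{q_{m,\nu}(x)},
\end{equation*}
where $q_{m,\nu}$ is the monic polynomial $\prod_{\zeta\in\mathcal{M}_{m,\nu}}(x-\zeta)$ when $\mathcal{M}_{m,\nu}\ne\emptyset$ and $q_{m,\nu}\equiv 1$ otherwise. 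Since $h^2 W[f,g] = W[hf,hg]$, one has $W[\widetilde{U}_{m,\nu},\widetilde{V}_{m,\nu}](x) = q_{m,\nu}^{-2}(x)\,W[C_\nu,\,R_{m-1,\nu+1}C_{\nu+m}](x)$, so dividing by the square of $q_{m,\nu}$ cancels exactly the double zeros that the Wronskian acquires at the common zeros, leaving an expression with removable singularities at $x\in\mathcal{M}_{m,\nu}$.

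Next I would invoke the preceding (empty-case) cylinder theorem rather than reprove it: the key analytic input is that $W[C_\nu,\,R_{m-1,\nu+1}C_{\nu+m}](x)>0$ for $x>c_{\nu+m-1,1}$, established there via P\'almai's inequality~\eqref{C1} and the sign of~\eqref{BL5}. That positivity is a property of the genuine Wronskian and is unaffected by the polynomial rescaling, so $W[\widetilde{U}_{m,\nu},\widetilde{V}_{m,\nu}]>0$ on the same range, and the counting argument from the proof of Theorem~\ref{thm:3.1} applies verbatim to $\widetilde{U}_{m,\nu}$ and $\widetilde{V}_{m,\nu}$ on $(c_{\nu+m-1,1},\infty)$: between consecutive zeros of $\widetilde{U}_{m,\nu}$ (equivalently, of $C_\nu$ after deleting $\mathcal{M}_{m,\nu}$) there is exactly one zero of $\widetilde{V}_{m,\nu}$, because positivity of the Wronskian forces the derivative of $\widetilde{V}_{m,\nu}$ to have matching signs at consecutive zeros and forbids double zeros. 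On $(0,c_{\nu+m,1})$, the finite sign-chasing argument of the empty-case proof — using the three-term relations~\eqref{CL1} and~\eqref{CL2} together with the signs of $C_{\nu-1}$, $R_{m,\nu}$ and $C_\nu$ — continues to pin down the interlacing pattern, now with the common zeros absorbed into the merged index.

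The main obstacle, and the one point where care is genuinely required, is the bookkeeping at the common zeros themselves. A point $\zeta\in\mathcal{M}_{m,\nu}$ is simultaneously a zero of $C_\nu$ and of $C_{\nu+m}$, hence (via~\eqref{CL1} and the fact that $C_\nu,C_{\nu-1}$ share no zeros) also a zero of $R_{m-1,\nu+1}$; after removing $q_{m,\nu}$ it must appear once in the merged sequence $\{\tau_{m,\nu,k}\}$ and be deleted from the $C_\nu$-sequence, exactly as $\mathcal{N}_{m,\nu}$ is handled in Theorem~\ref{Inter1}. I would verify, just as in remark~(ii) following Theorem~\ref{Inter1}, that $\mathcal{M}_{m,\nu}$ cannot contain two consecutive zeros of $C_\nu$, so that the deletions never collapse two adjacent $\tau$-entries; this guarantees the reindexed sequence $\{c_{\nu,k_\ell}\}$ still strictly interlaces $\{\tau_{m,\nu,k}\}$ in the claimed pattern. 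Beyond this reindexing check, the proof is a direct transcription of the two earlier arguments and needs no new estimates.
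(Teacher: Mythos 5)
Your proposal is correct and is essentially the paper's own proof: the paper establishes this theorem precisely by combining the preceding empty-case cylinder argument with the common-zero division trick of Theorem~\ref{Inter1}, which is exactly what you do. One caveat on wording: when $\mathcal{M}_{m,\nu}\neq\emptyset$ the genuine Wronskian $W[C_\nu,\,R_{m-1,\nu+1}C_{\nu+m}]$ is \emph{not} strictly positive on $(c_{\nu+m-1,1},\infty)$ --- it vanishes to second order exactly at the common zeros, since each such point is a simultaneous zero of $C_{\nu+m}$ and $R_{m-1,\nu+1}$ --- so the strict positivity your counting argument needs is a property of the rescaled Wronskian $q_{m,\nu}^{-2}\,W[C_\nu,\,R_{m-1,\nu+1}C_{\nu+m}]$ (nonvanishing and positive at $\mathcal{M}_{m,\nu}$ by the second-order cancellation your own first paragraph records), not of the unrescaled Wronskian as your second paragraph asserts.
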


\section{The derivative of the Bessel functions}

The reasoning employed in section 3 remains applicable in a broader and more general framework. For instance, an analogous version of the generalized interlacing property can be established by considering the derivative $J_\nu'$ of the Bessel function along with $J_{\nu+m}$. We begin by noting that
\begin{equation*}
        J_\nu'(x) = \frac12 \big( J_{\nu-1}(x) - J_{\nu+1}(x) \big).
\end{equation*}
By employing the equation \eqref{BL2} with $\nu \mapsto \nu-1+2k$ and $m \mapsto m+1-2k$ for each $k=1,\,2$, we derive the following recurrence relation
\begin{equation}\label{derivative1}
        J_\nu'(x) = -R^{*}_{m-1,\nu}(x) J_{\nu+m}(x) + R^{*}_{m,\nu}(x) J_{\nu+m-1}(x),
\end{equation}
where the associated polynomial $R^{*}_{m,\nu}(x)$ is defined as
\begin{align}\label{def*}
    \begin{split}
        R^{*}_{m,\nu}(x) &= \frac{1}{2}\big(R_{m,\nu}(x)-R_{m-2,\nu+2}(x)\big)  \\
        &=\frac{1}{2}\big(R_{m,\nu}(x)+(-1)^m R_{-m,-\nu}(x)\big).
    \end{split}
\end{align}
We present several examples of them as follows:
\begin{equation}\label{lommel*}
    R_{0,\nu}^{*}(x)=1,\quad R_{1,\nu}^{*}(x)=\frac{\nu}{x}, \quad  R_{2,\nu}^{*}(x)=\frac{2\nu(\nu+1)}{x^2}-1.
\end{equation}
It is apparent from the equation \eqref{def*} that for $\nu\in \R$ and $m\in \mathbb{Z}$,
\begin{equation}\label{Graf2}
    R_{m,\nu}^* (x) = (-1)^m R_{-m,-\nu}^*(x).
\end{equation}

Additionally, it can be readily verified that the associated polynomial $R^{*}_{m,\nu}$ satisfies the following recurrence relation
\begin{equation*}
    R^{*}_{m-1,\nu}(x) + R^{*}_{m+1,\nu}(x) = \frac{2(\nu+m)}{x} R^{*}_{m,\nu}(x),
\end{equation*}
which aligns with the recurrence \eqref{Lommel1}. Dividing both sides by $R_{m,\nu}^*$ and differentiating, we obtain
\begin{equation*}
    W[R_{m+1,\nu}^{*},R_{m,\nu}^{*}](x)=\frac{2(\nu+m)}{x^2}\big(R_{m,\nu}^{*}(x)\big)^2+ W[R_{m,\nu}^{*},R_{m-1,\nu}^{*}](x).
\end{equation*}
Continuing this iterative process, we have that for $m\in\mathbb{N}$,
\begin{equation}\label{AssoWron}
\begin{split}
    W[R_{m+1,\nu}^{*},&R_{m,\nu}^{*}](x)\\
    &=\frac{2(\nu+m)}{x^2}\big(R_{m,\nu}^{*}(x)\big)^2+\cdots+\frac{2(\nu+1)}{x^2}\big(R_{1,\nu}^{*}(x)\big)^2+\frac{\nu}{x^2},
\end{split}
\end{equation}
with an identity
\begin{equation*}
    W[R_{1,\nu}^{*},R_{0,\nu}^{*}](x)=\frac{\nu}{x^2}.
\end{equation*}

As in the proof of Proposition \ref{Wron1}, we deduce from the equations \eqref{derivative1} and \eqref{AssoWron} that for $m\in \mathbb{N}$,
\begin{align*}
    W\qb{J_\nu',\, R_{m,\nu}^{*}\,J_{\nu+m}}(x)
    = 2J_{\nu+m}^2(x) \left[ \big( R^{*}_{m,\nu}(x) \big)^2  \sum_{k=1}^\infty \frac{x^2+j_{\nu+m, k}^2}{\big( x^2 - j_{\nu+m, k}^2 \big)^2  } \right. \hspace{37.5pt} \\
    \left. + \frac{1}{x^2} \sum_{k=1}^{m} \rb{\nu+k} \big( R_{k,\nu}^{*}(x)\big)^2 +\frac{\nu}{2x^2}\right].
\end{align*}
If $\nu>0$ and $m\in \mathbb{N}$, the Wronskian vanishes at the common zeros of $J_{\nu+m}$ and $R_{m,\nu}^{*}$, and it is positive otherwise. This also holds true for $\nu=0$ and $m\ge 2$. In particular, in the case when $m=0$, a specific case arises as
\begin{equation*}
    W\qb{J_\nu',\, R_{0,\nu}^{*}\,J_{\nu}}(x) = W\qb{J_\nu',\, J_{\nu}}(x) = J_{\nu}^2 (x)\rb{ \frac{\nu}{x^2}+ 2\sum_{k=1}^\infty \frac{x^2+j_{\nu, k}^2}{\big( x^2 - j_{\nu, k}^2 \big)^2  }}.
\end{equation*}
On inspecting Wronskian $W\qb{J_\nu',\, R_{m,\nu}^{*}\,J_{\nu}}$, it becomes completely devoid of significance in cases where $R_{m,\nu}^*$ is identically zero for some $m$ and $\nu$.
\begin{lemma}\label{lem:5.1}
    Let $m\in \mathbb{N}\cup\cb{0}$ and $\nu >0$.
    The polynomial $R_{m,\nu}^*$ cannot be identically zero.
\end{lemma}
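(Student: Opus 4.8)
The plan is to show that $R^{*}_{m,\nu}$ has a nonzero coefficient, which is cleanest to extract from the explicit expansion of the ordinary Lommel polynomial in \eqref{L} together with the defining relation \eqref{def*}. Recall that $R^{*}_{m,\nu} = \tfrac12\big(R_{m,\nu} - R_{m-2,\nu+2}\big)$, and both $R_{m,\nu}$ and $R_{m-2,\nu+2}$ are polynomials in $1/x$ of degrees $m$ and $m-2$ respectively. The key observation is that the top-degree term, namely the coefficient of $x^{-m}$, can only come from $R_{m,\nu}$, since $R_{m-2,\nu+2}$ contributes nothing of that degree. Hence the leading coefficient of $R^{*}_{m,\nu}$ (as a polynomial in $1/x$) is simply half the leading coefficient of $R_{m,\nu}$.

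First I would read off from \eqref{L} the $k=0$ term of $R_{m,\nu}$, which gives the coefficient of $(x/2)^{-m}$ as $\Gamma(\nu+m)/\Gamma(\nu)$. For $\nu>0$ and $m\in\mathbb{N}$ this is a product $\nu(\nu+1)\cdots(\nu+m-1)$, which is strictly positive and in particular nonzero. Therefore the coefficient of $x^{-m}$ in $R^{*}_{m,\nu}$ equals $\tfrac12 \cdot 2^{m}\Gamma(\nu+m)/\Gamma(\nu) \ne 0$, so $R^{*}_{m,\nu}$ cannot vanish identically. The case $m=0$ is immediate from \eqref{lommel*}, where $R^{*}_{0,\nu}\equiv 1$, so I would dispose of it separately at the outset and then treat $m\ge 1$ by the leading-coefficient argument.

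A convenient sanity check comes from the examples in \eqref{lommel*}: for $m=1$ the leading term is $\nu/x$, and for $m=2$ it is $2\nu(\nu+1)/x^2$, both matching the formula $\Gamma(\nu+m)/\Gamma(\nu)\cdot(x/2)^{-m}$ after the factor of $\tfrac12$ and the $2^{m}$ normalization. This confirms that no cancellation occurs at the top degree. The main (and essentially only) obstacle is bookkeeping: one must verify that $R_{m-2,\nu+2}$ genuinely has degree $m-2 < m$ in $1/x$ and so cannot interfere with the $x^{-m}$ coefficient; this is transparent from \eqref{L} since the exponents $2k-m$ appearing there for $R_{m-2,\nu+2}$ range only down to $-(m-2)$. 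Once this degree comparison is in place, positivity of $\Gamma(\nu+m)/\Gamma(\nu)$ for $\nu>0$ closes the argument with no further computation.
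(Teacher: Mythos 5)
Your proposal is correct and is essentially the paper's own argument: both isolate the leading term $\tfrac{1}{2}(\nu)_m\left(\tfrac{x}{2}\right)^{-m}$ coming from the $k=0$ term of \eqref{L}, observe that $R_{m-2,\nu+2}$ cannot contribute at that degree, and conclude from the positivity of $(\nu)_m=\Gamma(\nu+m)/\Gamma(\nu)$ for $\nu>0$; the paper merely phrases this as the asymptotic statement $R^{*}_{m,\nu}(x)=\tfrac{1}{2}(\nu)_m\left(\tfrac{x}{2}\right)^{-m}+O\left(x^{-m+2}\right)$ as $x\to0+$. The only slight difference is that the paper checks $m=0$ and $m=1$ directly from \eqref{lommel*} and runs the degree comparison only for $m\ge2$, which sidesteps the wrinkle in your $m=1$ case that $R_{-1,\nu+2}$ is not covered by the formula \eqref{L} and must instead be seen to vanish identically via \eqref{Graf}.
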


\begin{proof}
We first observe from the equation \eqref{lommel*} that $R_{m,\nu}^*$ is not identically zero if $m=0$ and $m=1$.

Now, we shall show that $R_{m,\nu}^*$ cannot be zero function for $m\ge 2$ and $\nu >0$. 
We examine the definition of Lommel polynomials in \eqref{L}, which yields that for each $m\ge 2$ and $\nu> 0$, as $x \to 0+$,
\begin{equation*}
    R_{m,\nu}(x) = (\nu)_m \rb{\frac{x}{2}}^{-m} + O\rb{x^{-m+2}},\quad R_{m-2,\nu+2}(x) = O\rb{x^{-m+2}}.
\end{equation*}
Moreover, we obtain that for each $m\ge2$ and $\nu > 0$,
\begin{equation*}
    R^{*}_{m,\nu}(x) = \frac{1}{2}\big(R_{m,\nu}(x)-R_{m-2,\nu+2}(x)\big) =\frac{(\nu)_m}{2} \rb{\frac{x}{2}}^{-m} + O\rb{x^{-m+2}}
\end{equation*}
as $x \to 0+$, which completes the proof.
\end{proof}

We denote by $\big\{ {j}_{\nu,k}' \big\}_{k=1}^{\infty}$ and $\big\{ {\rho}_{m,\nu,k}^* \big\}_{k\ge 1}$ the sequences of all positive zeros of $J_\nu'$ and $R_{m,\nu}^*$, respectively.
Let us introduce the extended notation
\begin{equation*}
    \mathcal{N}_{m,\nu}^* = \cb{ j_{\nu,k}'\,:\, J_{\nu+m}\big( j_{\nu,k}' \big) =0 }
    =  \cb{ j_{\nu,k}'\,:\, R^{*}_{m,\nu}\big( j_{\nu,k}' \big) =0 },
\end{equation*}
which represents the collection of positive zeros common to $J'_\nu$ and $J_{\nu+m}$. 
\begin{definition}
    Let
\begin{align*}
    \big\{ {j}_{\nu,k_\ell}' \big\}_{\ell=1}^{\infty} &= \big\{ j_{\nu,k}' \big\}_{k=1}^{\infty} \setminus \mathcal{N}_{m,\nu}^*,  \\
    \big\{ \omega_{m,\nu,k}^{*} \big\}_{k=1}^{\infty} &= \big\{j_{\nu+m,k}\big\}_{k=1}^\infty \cup \big\{ \rho_{m,\nu,k}^{*} \big\}_{k\ge 1},
\end{align*}
where those are arranged in ascending order of magnitude.
\end{definition}

Siegel and Shidlovskii (see \cite[\S 4]{Siegel} and \cite[p. 217]{Shidlovskii}) also proved that all the zeros of $J_\nu'(x)$, $x\ne0$, are transcendental. Thus an analogue of Observations \ref{ob1} and \ref{ob2} can be established as follows:
\begin{observation}
Let $\nu>0$ and $m\ge 0$. Then we have
\begin{enumerate}[label=\rm{(\roman*)}]
    \item $\vb{ \mathcal{N}_{m,\nu}^* } \le \left\lfloor \frac{m}{2} \right\rfloor$.
    \item $\mathcal{N}_{m,\nu}^*$ is an empty set if $\nu$ is rational.
\end{enumerate}
\end{observation}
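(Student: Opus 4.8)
The plan is to treat the two assertions separately, exploiting the displayed identity $\mathcal{N}_{m,\nu}^* = \cb{ j_{\nu,k}' : R^{*}_{m,\nu}\big(j_{\nu,k}'\big) = 0}$ that precedes the statement. In particular, every element of $\mathcal{N}_{m,\nu}^*$ is a distinct positive zero of the associated polynomial $R_{m,\nu}^*$, so it suffices to bound the number of positive zeros of $R_{m,\nu}^*$ for part (i), and to obstruct any sharing of positive zeros between $J_\nu'$ and $R_{m,\nu}^*$ for part (ii).

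For (i) I would bound the number of positive zeros of $R_{m,\nu}^*$ itself by means of a parity phenomenon already visible in \eqref{lommel*}. By \eqref{L} the Lommel polynomial $R_{m,\nu}$ is supported on the monomials $x^{2k-m}$, $0\le k\le \lfloor m/2\rfloor$, all of whose exponents share the parity of $m$, and the same is true of $R_{m-2,\nu+2}$. Hence, writing $u=1/x^2$, the combination $R^{*}_{m,\nu}=\tfrac12\big(R_{m,\nu}-R_{m-2,\nu+2}\big)$ is, up to the harmless factor $x^{-1}$ that appears exactly when $m$ is odd, a genuine polynomial in $u$. From the leading-term computation carried out in the proof of Lemma \ref{lem:5.1}, namely $R^{*}_{m,\nu}(x)=\tfrac{(\nu)_m}{2}(x/2)^{-m}+O(x^{-m+2})$ as $x\to 0+$, this polynomial in $u$ has degree exactly $\lfloor m/2\rfloor$ with nonvanishing leading coefficient for $\nu>0$. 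Since $x\mapsto u=1/x^2$ is a bijection of the positive reals, each positive zero of $R^{*}_{m,\nu}$ corresponds to a distinct positive root $u>0$, of which there are at most $\lfloor m/2\rfloor$; this yields $\vb{\mathcal{N}_{m,\nu}^*}\le \lfloor m/2\rfloor$.

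For (ii) I would run the transcendence argument used for Observation \ref{ob2}. If $\nu$ is rational, then the coefficients of $R_{m,\nu}$ in \eqref{L} are rational, since each ratio $\Gamma(\nu+m-k)/\Gamma(\nu+k)=(\nu+k)_{m-2k}$ is a finite product of rational numbers; the same holds for $R_{m-2,\nu+2}$, and therefore for $R^{*}_{m,\nu}$. Consequently every zero of $R^{*}_{m,\nu}$ is algebraic. On the other hand, by the Siegel–Shidlovskii theorem cited above, every nonzero zero of $J_\nu'$ is transcendental. Thus $J_\nu'$ and $R^{*}_{m,\nu}$ can share no positive zero, forcing $\mathcal{N}_{m,\nu}^*=\emptyset$.

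The bookkeeping is routine, and the argument runs in complete parallel with Observations \ref{ob1} and \ref{ob2}. The only points requiring genuine care are verifying that the leading $u$-coefficient of $R^{*}_{m,\nu}$ does not vanish for $\nu>0$, which the proof of Lemma \ref{lem:5.1} already supplies, and confirming that the transcendence statement for the zeros of $J_\nu'$ is available precisely in the rational-$\nu$ regime invoked in (ii); granting these, both parts follow mechanically.
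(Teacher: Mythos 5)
Your proof is correct and follows exactly the route the paper intends: the paper states this observation without a written proof, as an ``analogue of Observations \ref{ob1} and \ref{ob2},'' and your two steps --- bounding $\vb{\mathcal{N}_{m,\nu}^*}$ by the number of positive zeros of $R_{m,\nu}^*$ viewed (after the parity reduction) as a polynomial of degree $\lfloor m/2\rfloor$ in $1/x^2$ with nonvanishing leading coefficient, and pitting the algebraicity of the zeros of $R_{m,\nu}^*$ for rational $\nu$ against the Siegel--Shidlovskii transcendence of the nonzero zeros of $J_\nu'$ --- are precisely the analogues of the paper's proofs of those two observations. The points you flag (nonvanishing of the leading coefficient via Lemma \ref{lem:5.1}, and the transcendence result holding in the rational-$\nu$ regime) are handled correctly.
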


In summary, with the aid of Lemma \ref{lem:5.1}, Wronskian $W\qb{J_\nu',\, R_{m,\nu}^{*}\,J_{\nu+m}}$ maintains a positive sign for $\nu>0$, $m\in \mathbb{N}\cup\cb{0}$ and $x\in \R$. Hence, akin to the reasoning expounded in the proofs of Theorems \ref{thm:3.1} and \ref{Inter1}, we can derive the following theorem.
\begin{theorem}\label{thm:5.1}
Let $\nu>0$ and $m\in \mathbb{N}\cup \cb{0}$. Then the positive zeros of $J_\nu'$ and $R_{m,\nu}^{*}\,J_{\nu+m}$ are interlaced in the sense that
\begin{equation*}
    0 < {j}_{\nu,k_1}' < \omega_{m,\nu,1}^{*} < {j}_{\nu,k_2}' < \omega_{m,\nu,2}^{*} < \cdots.
\end{equation*}
\end{theorem}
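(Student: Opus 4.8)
The plan is to mirror the proofs of Theorems \ref{thm:3.1} and \ref{Inter1}, transplanting $J_\nu$ to its derivative $J_\nu'$ and the pair $\rb{R_{m-1,\nu+1}, J_{\nu+m}}$ to $\rb{R_{m,\nu}^*, J_{\nu+m}}$. First I would absorb the common zeros exactly as in Theorem \ref{Inter1}: setting $p_{m,\nu}^*(x) = \prod_{\zeta \in \mathcal{N}_{m,\nu}^*}\rb{x-\zeta}$, a monic polynomial of degree $\le \lfloor m/2\rfloor$ (and $\equiv 1$ when $\mathcal{N}_{m,\nu}^*=\emptyset$), I introduce
\[
    U_{m,\nu}^*(x) = \frac{J_\nu'(x)}{p_{m,\nu}^*(x)}, \qquad V_{m,\nu}^*(x) = \frac{R_{m,\nu}^*(x)\,J_{\nu+m}(x)}{p_{m,\nu}^*(x)},
\]
so that $W\qb{U_{m,\nu}^*, V_{m,\nu}^*} = \rb{p_{m,\nu}^*}^{-2}\,W\qb{J_\nu', R_{m,\nu}^* J_{\nu+m}}$. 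By Lemma \ref{lem:5.1} the factor $R_{m,\nu}^*$ is a genuine nonzero function, and by the displayed Wronskian identity preceding Lemma \ref{lem:5.1} this Wronskian is entire, with removable singularities at $\pm j_{\nu+m,k}$ and at the points of $\mathcal{N}_{m,\nu}^*$, and strictly positive for $\nu>0$ and $x\in\R$.

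With positivity in hand, the interior of the argument will be verbatim that of Theorem \ref{thm:3.1}. The positive zeros of $J_\nu'$ are simple, real, and tend to infinity (see \cite[\S 15]{Watson2}), so the zeros of $U_{m,\nu}^*$ inherit these properties; since $U_{m,\nu}^*$ and $V_{m,\nu}^*$ share no zero (by \eqref{derivative1}, a common zero of $J_\nu'$ and $R_{m,\nu}^*$ forces $J_{\nu+m}$ to vanish there too, as $R_{m,\nu}^*$ and $R_{m-1,\nu}^*$ have no common zeros, hence it lies in $\mathcal{N}_{m,\nu}^*$ and is divided out), the positivity of $W\qb{U_{m,\nu}^*, V_{m,\nu}^*}$ makes every zero of $V_{m,\nu}^*$ simple. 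Then, for consecutive zeros $j',\bar{j}'$ of $U_{m,\nu}^*$, the alternation of sign of $\rb{U_{m,\nu}^*}'$ together with $W\qb{U_{m,\nu}^*, V_{m,\nu}^*}>0$ forces $V_{m,\nu}^*\rb{j'}V_{m,\nu}^*\rb{\bar{j}'}<0$, giving an odd number of zeros of $V_{m,\nu}^*$ in $\rb{j',\bar{j}'}$, and the derivative-sign argument of Theorem \ref{thm:3.1} pins this number at exactly one. This places precisely one element of $\cb{\omega_{m,\nu,k}^*}$ strictly between each consecutive pair of zeros of $U_{m,\nu}^*$.

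The one step that is genuinely new --- and the main obstacle --- is the first interval, where Theorem \ref{thm:3.1} relied on the ordering \eqref{order}. Here I must show $j_{\nu,1}' < \omega_{m,\nu,1}^* = \min\rb{j_{\nu+m,1},\rho_{m,\nu,1}^*}$. The inequality $j_{\nu+m,1} > j_{\nu,1} > j_{\nu,1}'$ is immediate, but the analogue of \eqref{order}, namely $\rho_{m,\nu,1}^* > j_{\nu,1}'$, is not available off the shelf, so I would extract it from the Wronskian directly. On $\rb{0,j_{\nu,1}'}$ there are no common zeros, so $W\qb{J_\nu', R_{m,\nu}^* J_{\nu+m}}>0$ there; hence the ratio $G(x) := J_\nu'(x)/\rb{R_{m,\nu}^*(x)J_{\nu+m}(x)}$ satisfies $G' = -W\qb{J_\nu', R_{m,\nu}^* J_{\nu+m}}/\rb{R_{m,\nu}^* J_{\nu+m}}^2 < 0$ wherever $R_{m,\nu}^* J_{\nu+m}$ is nonzero. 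The asymptotics recorded in the proof of Lemma \ref{lem:5.1}, together with \eqref{lommel*} for $m=0,1$, give $R_{m,\nu}^*(x)>0$, $J_{\nu+m}(x)>0$ and $J_\nu'(x)>0$ as $x\to 0^+$, and a power count shows $G(x)\sim c/x \to +\infty$ for some $c>0$. Now $\omega_{m,\nu,1}^*$ is the first zero of $R_{m,\nu}^* J_{\nu+m}$, and it cannot equal $j_{\nu,1}'$ (the latter is not a common zero, since $j_{\nu,1}'<j_{\nu+m,1}$). Were $\omega_{m,\nu,1}^* \le j_{\nu,1}'$, then on $\rb{0,\omega_{m,\nu,1}^*}$ we would have $J_\nu'>0$ and $R_{m,\nu}^* J_{\nu+m}>0$, so $G>0$ is strictly decreasing from $+\infty$; but as $x\to\omega_{m,\nu,1}^{*-}$ the denominator tends to $0^+$ while $J_\nu'>0$, driving $G\to+\infty$, a contradiction. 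Hence $j_{\nu,1}'<\omega_{m,\nu,1}^*$, so $R_{m,\nu}^* J_{\nu+m}$ has no zero on $\rb{0,j_{\nu,1}'}$, and the displayed interlacing pattern follows.
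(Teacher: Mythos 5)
Your proposal is correct, and its skeleton is exactly the paper's: the paper proves this theorem in a single summary sentence, observing that (thanks to Lemma \ref{lem:5.1}) the Wronskian $W[J_\nu',\, R_{m,\nu}^{*}J_{\nu+m}]$ is positive away from $\mathcal{N}_{m,\nu}^{*}$, and then invoking ``the reasoning expounded in the proofs of Theorems \ref{thm:3.1} and \ref{Inter1}'' --- which is precisely your division by $p_{m,\nu}^{*}$, the sign-alternation argument for existence, and the derivative-sign argument for uniqueness. Where you go beyond the paper is the endpoint step, and you are right that it is the only step that does not transfer verbatim: in Theorem \ref{thm:3.1} the absence of zeros of $R_{m-1,\nu+1}J_{\nu+m}$ on $(0,j_{\nu,1})$ was imported from Grosjean's inequality \eqref{order} together with $j_{\nu+m,1}>j_{\nu,1}$, whereas the paper nowhere states an analogue $\rho_{m,\nu,1}^{*}>j_{\nu,1}'$ for the associated polynomials $R_{m,\nu}^{*}$. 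Your monotone-ratio argument supplies this cleanly: $G=J_\nu'/(R_{m,\nu}^{*}J_{\nu+m})$ has $G'=-W/(R_{m,\nu}^{*}J_{\nu+m})^2<0$ on $(0,j_{\nu,1}')$ (no common zeros there, so $W>0$), $G\to+\infty$ as $x\to 0^{+}$ by the small-$x$ asymptotics recorded in the proof of Lemma \ref{lem:5.1} and \eqref{lommel*}, and a hypothetical first zero $\omega_{m,\nu,1}^{*}\le j_{\nu,1}'$ of the denominator would force $G\to+\infty$ again from the left of that zero, contradicting strict monotonicity; the exclusion $\omega_{m,\nu,1}^{*}\ne j_{\nu,1}'$ via \eqref{derivative1} and $j_{\nu,1}'<j_{\nu+m,1}$ is also sound. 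So your proof follows the paper's intended route but fills a genuine (if small) gap that the paper's one-line proof glosses over; the rest matches Theorems \ref{thm:3.1} and \ref{Inter1} line by line.
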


A simple consequence of Theorem \ref{thm:5.1}, with a crucial observation being the absence of positive zeros for the polynomial $R_{m,\nu}^{*}$, leads to the following:
\begin{corollary}
    Let $\nu>0$. Then the positive zeros of $J_\nu'$ and $J_{\nu+m}$ are interlaced according to the following pattern
    \begin{equation}\label{coro}
    0 < j_{\nu,1}' < j_{\nu+m,1} < j_{\nu,2}' < j_{\nu+m,2} < \cdots
    \end{equation}
    if $m=0$ or $m=1$.
\end{corollary}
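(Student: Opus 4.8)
The goal is to deduce the interlacing pattern \eqref{coro} from Theorem~\ref{thm:5.1} in the two special cases $m=0$ and $m=1$. The plan is to show that for these particular values of $m$, the auxiliary polynomial $R_{m,\nu}^{*}$ contributes no positive zeros, so that the set $\big\{ \omega_{m,\nu,k}^{*} \big\}_{k\ge 1}$ collapses to exactly the positive zeros of $J_{\nu+m}$, and moreover that the common-zero set $\mathcal{N}_{m,\nu}^{*}$ is empty. Once both of these facts are in place, the indices $k_\ell$ reduce to $k_\ell=\ell$, and the interlacing statement of Theorem~\ref{thm:5.1} becomes precisely the clean pattern asserted in \eqref{coro}.

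First I would read off from \eqref{lommel*} that $R_{0,\nu}^{*}(x)=1$ and $R_{1,\nu}^{*}(x)=\nu/x$. Neither of these has a positive zero: the first is a nonzero constant, and the second is strictly positive for $\nu>0$ and $x>0$. Hence the sequence $\big\{ \rho_{m,\nu,k}^{*} \big\}_{k\ge 1}$ is empty for $m=0$ and $m=1$, so by the definition of $\big\{ \omega_{m,\nu,k}^{*} \big\}$ we have $\omega_{m,\nu,k}^{*}=j_{\nu+m,k}$ for every $k\ge 1$.

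Next I would dispose of the common zeros. Since $\mathcal{N}_{m,\nu}^{*}=\big\{ j_{\nu,k}'\,:\, R^{*}_{m,\nu}\big( j_{\nu,k}' \big)=0 \big\}$ and $R_{m,\nu}^{*}$ has no positive zeros in these two cases, it is immediate that $\mathcal{N}_{m,\nu}^{*}=\emptyset$. Therefore $\big\{ j_{\nu,k_\ell}' \big\}_{\ell=1}^{\infty}=\big\{ j_{\nu,k}' \big\}_{k=1}^{\infty}$, i.e.\ $j_{\nu,k_\ell}'=j_{\nu,\ell}'$. Substituting $\omega_{m,\nu,\ell}^{*}=j_{\nu+m,\ell}$ and $j_{\nu,k_\ell}'=j_{\nu,\ell}'$ into the interlacing chain furnished by Theorem~\ref{thm:5.1} yields exactly \eqref{coro}.

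This argument is essentially a bookkeeping reduction, so I do not anticipate a genuine obstacle; the only point requiring care is verifying that $R_{m,\nu}^{*}$ has no positive zero in each of the two cases, which is what the crucial observation mentioned in the corollary's preamble amounts to. One should note that this clean pattern does \emph{not} persist for $m\ge 2$: already $R_{2,\nu}^{*}(x)=2\nu(\nu+1)/x^2-1$ from \eqref{lommel*} has a positive zero, so the associated polynomial genuinely contributes interlacing points there and the simple form of \eqref{coro} is specific to $m\in\{0,1\}$.
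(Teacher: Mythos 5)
Your proposal is correct and follows exactly the paper's route: the paper derives the corollary from Theorem~\ref{thm:5.1} together with the ``crucial observation'' that $R_{0,\nu}^{*}\equiv 1$ and $R_{1,\nu}^{*}(x)=\nu/x$ have no positive zeros, which simultaneously empties the sequence $\big\{\rho_{m,\nu,k}^{*}\big\}$ and (via the identity $\mathcal{N}_{m,\nu}^{*}=\big\{ j_{\nu,k}'\,:\, R^{*}_{m,\nu}\big( j_{\nu,k}'\big)=0\big\}$) the set $\mathcal{N}_{m,\nu}^{*}$, collapsing the general interlacing chain to \eqref{coro}. Your closing remark about $m=2$ likewise matches the paper's subsequent discussion of the breakdown caused by the zero $\rho^{*}=\sqrt{2\nu(\nu+1)}$ of $R_{2,\nu}^{*}$.
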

This interlacing property between $J_\nu'$ and $J_{\nu+m}$ concurs with the classical result established by P{\'a}lmai and Apagyi \cite{Palmai2}, while interlacing property breaks down when $m=2$ and $\nu>0$, in the sense that
\begin{equation*}
        j_{\nu+2,k} < j_{\nu,s}' < \rho^* < j_{\nu,s+1}' < j_{\nu+2,k+1},
\end{equation*}
for some $1\le k < s$, where $\rho^*= \sqrt{2\nu(\nu+1)}$ represents the unique positive zero of $R_{2,\nu}^*(x) = 2\nu(\nu+1)/x^2 -1$ for $\nu>0$.

\bigskip \noindent
{\bf Acknowledgements.}
The first author is especially grateful to Mourad E. H. Ismail for insightful discussions.

\newpage


\begin{thebibliography}{12}

\bibitem{Benton} T. C. Benton and H. D. Knoble, \emph{Common zeros of two Bessel functions}, Math. Comp. 32(142), pp. 533--535 (1978)

\bibitem{CC1} Y.-K. Cho and S-.Y. Chung, \emph{Collective interlacing and ranges of the positive zeros of Bessel functions}, J. Math. Anal. Appl. 500(1), 125166 (2021)

\bibitem{CCP} Y.-K. Cho, S-.Y. Chung and Young Woong Park, \emph{Partial fraction expansions and zeros of Hankel transforms}, preprint (2023)

\bibitem{Driver1} K. Driver, K. Jordaan and N. Mbuyi, \emph{Interlacing of the zeros of Jacobi polynomials with different parameters}, Numer. Algorithms 49(1-4), pp. 143--152 (2008)

\bibitem{Driver2} K. Driver and M. M{\"o}ller, \emph{Zeros of the hypergeometric polynomials $F(-n, b; -2n; z)$}, J. Approx. Theory 110(1), pp. 74--87 (2001)

\bibitem{Driver3} K. Driver and M. E. Muldoon, \emph{Sharp parameter intervals for interlacing of zeros of equal degree Laguerre polynomials}, J. Approx. Theory 248, 105303 (2019)


\bibitem{Elbert} {\'A}. Elbert, \emph{Some recent results on the zeros of Bessel functions and orthogonal polynomials}, J. Comput. Appl. Math. 133(1-2), pp. 65--83 (2001)

\bibitem{Erdelyi} A. Erd{\'e}lyi, W. Magnus, F. Oberhettinger and F. G. Tricomi, \emph{Higher transcendental functions}, vol. II. Melbourne, FL (1981)

\bibitem{Grosjean} C. C. Grosjean, \emph{The orthogonality property of the Lommel polynomials and a twofold infinity of relations between Rayleigh's $\sigma$-sums}, J. Comput. Appl. Math. 10, pp. 355--382 (1984)

\bibitem{Ifantis} E. K. Ifantis and P. D. Siafarikas, \emph{A differential equation for the zeros of Bessel functions}, Appl. Anal. 20(3-4), pp. 269--281 (1985)

\bibitem{Ismail1} M. E. Ismail, \emph{The variation of zeros of certain orthogonal polynomials}, Adv. in Appl. Math. 8(1), pp. 111--118 (1987)

\bibitem{Ismail2} M. E. Ismail and M. E. Muldoon, \emph{On the variation with respect to a parameter of zeros of Bessel and $q$-Bessel functions}, J. Math. Anal. Appl. 135(1), pp. 187--207 (1988)

\bibitem{Kerimov} M. K. Kerimov, \emph{Studies on the zeros of Bessel functions and methods for their computation: 2. Monotonicity, convexity, concavity, and other properties}, Comput. Math. Math. Phys. 56, pp. 1175--1208 (2016)

\bibitem{Laforgia} A. Laforgia and P. Natalini, \emph{Zeros of Bessel functions: monotonicity, concavity, inequalities}, Le Matematiche, 62(2), pp. 255--270 (2007)

\bibitem{Olver} F. W. J. Olver, \emph{A further method for the evaluation of zeros of Bessel functions and some new asymptotic expansions for zeros of functions of large order}, Proc. Cambridge Philos. Soc. 47, pp. 699--712 (1951)

\bibitem{Palmai} T. P{\'a}lmai, \emph{On the interlacing of cylinder functions}, Math. Inequal. Appl. 16, pp. 241--247 (2013)

\bibitem{Palmai2} T. P{\'a}lmai and B. Apagyi, \emph{Interlacing of positive real zeros of Bessel functions}, J. Math. Anal. Appl. 375(1), pp. 320--322 (2011)

\bibitem{Petro} E. N. Petropoulou, P. D. Siafarikas and I. D. Stabolas, \emph{On the common zeros of Bessel functions}, J. Comput. Appl. Math. 153(1-2), pp. 387--393 (2003)

\bibitem{Shidlovskii} A. B. Shidlovskii, \emph{Transcendental numbers}, vol. 12. Walter de Gruyter (1989)

\bibitem{Siegel} C. L. Siegel, \emph{{\"U}ber einige Anwendungen diophantischer Approximationen: Abhandlungen der Preu{\ss}ischen Akademie der Wissenschaften}, Physikalisch-mathematische Klasse 1929, Nr. 1, Scuola Normale Superiore, pp. 81--138 (2014)

\bibitem{Watson1} G. N. Watson, \emph{The zeros of Lommel's polynomials}, Proc. Lond. Math. Soc. 2(1), pp. 266--272 (1921)

\bibitem{Watson2} G. N. Watson, \emph{A treatise on the theory of Bessel functions}, Cambridge University Press (1922)

\end{thebibliography}
\end{document}